\newtheorem{theorem}{Theorem}
\newtheorem{proposition}{Proposition}
\newtheorem{remark}{Remark}
\newtheorem{definition}{Definition}
\newtheorem{corollary}{Corollary}
\newcommand{\Ad}{\operatorname{Ad}}
\newcommand{\Arccos}{\operatorname{arccos}}
\newcommand{\sgn}{\operatorname{sgn}}
\newcommand{\Sh}{\operatorname{sh}}
\newcommand{\Ch}{\operatorname{ch}}
\newcommand{\Arch}{\operatorname{arch}}
\newcommand{\Th}{\operatorname{th}}
\newcommand{\Mink}{\operatorname{Mink}}
\newcommand{\Exp}{\operatorname{Exp}}
\newcommand{\Ann}{\operatorname{Ann}}
\newcommand{\Sim}{\operatorname{Sim}}
\begin{document}

\begin{flushleft}
UDK 519.46 + 514.763 + 512.81 + 519.9 + 517.911
\end{flushleft}
\begin{flushleft}
MSC 22E30, 49J15, 53C17
\end{flushleft}

\title[group $SO_0(2,1)$]{Sub-Riemannian distance on the Lie group $SO_0(2,1)$}
\author{V.\,N.\,Berestovskii, I.\,A.\,Zubareva}
\thanks{The work is partially supported by the Russian Foundation for Basic Research (Grant 14-01-00068-a), a grant of the Government of the Russian Federation for the State Support of Scientific Research (Agreement 14.B25.31.0029), and the State Maintenance Program for the Leading Scientific Schools of the Russian Federation (Grant NSh-2263.2014.1)}
\address{V.N.Berestovskii}
\address{Sobolev Institute of Mathematics SD RAS,  \newline 4 Acad. Koptyug avenue, 630090, Novosibirsk, Russia}
\email{vberestov@inbox.ru}
\address{I.A.Zubareva}
\address{Sobolev Institute of Mathematics SD RAS, Omsk Branch,  \newline 13 Pevtsova street, 644043, Omsk, Russia}
\email{i\_gribanova@mail.ru}
\maketitle
\maketitle {\small
\begin{quote}
\noindent{\sc Abstract.}
A left-invariant sub-Riemannian metric $d$ on the shortened Lorentz group $SO_0(2,1)$ under the condition that $d$ is right-invariant relative to the orthogonal Lie subgroup $1\otimes SO(2)$ is studied. The distance between arbitrary two elements, the cut locus (as the union of the subgroup $1\otimes SO(2)$ with the antipodal set to the submanifold of symmetric matrices in the open solid torus $SO_0(2,1)$), and the conjugate set for the unit are found for $(SO_0(2,1),d).$
\end{quote}}

{\small
\begin{quote}
\textit{Keywords and phrases:} conjugate set, cut locus, distance, geodesic, Lie algebra, Lie group,
invariant sub-Riemannian metric, shortest arc.
\end{quote}}

\section*{Introduction}

 In paper \cite{Ber} are found geodesics and shortest arcs for left-invariant and $SO(2)$--right-invariant sub-Riemannian metric $d$ on the Lie--Lorentz group (more precisely, so-called  the shortened Lorentz group) $SO_0(2,1),$ where $SO(2)\subset SO_0(2,1).$

In this paper, we find the cut locus (as the union of the subgroup $1\otimes SO(2)$ with the antipodal set to the submanifold of symmetric matrices in the open solid torus $SO_0(2,1)$) and the conjugate set for the unit, and we also compute distances between arbitrary elements in the metric space $(SO_0(2,1),d).$
We got analogous results for other groups in our papers  \cite{BerZub1}, \cite{BerZub2}. In this work, we apply the same geometric ideas and  interpretations of geodesics and shortest arcs as in \cite{Ber}, \cite{BerZ}.

One can give the following natural geometric description of the metric $d.$ The Lie group $SO_0(2,1)$ can be interpreted as an effective transitive group of all orientation-preserving isometries of the Lobachevskii plane $L^2$ with constant Gaussian curvature $-1$. By choosing any unit tangent vector $v_0$ on $L^2$ at  some point $w_0$, the equality $f(g)=dg(v_0),$ $g\in SO_0(2,1)$, determines a natural diffeomorphism $f: SO_0(2,1)\rightarrow L^2_1$ onto the space of unit tangent vectors on $L^2$. The space $L^2_1$ admits a natural Riemannian metric (scalar product) $g_1$ by Sasaki  (see \cite{Sas} or the tensor $g_1$ in the section $1K$ in Besse book \cite{Bes}) and an inner metric $\rho,$ associated with it. In addition, canonical projection $p: (L^2_1,g_1)\rightarrow L^2$ (or, which is equivalent, $p: SO_0(2,1)\rightarrow  SO_0(2,1)/SO(2)$)) is a \textit{Riemannian submersion} \cite{Bes}.
The metric $d$ is defined by left-invariant totally nonholonomic distribution $D$ on $SO_0(2,1),$ which is orthogonal to fibers of submersion $p,$ and the restriction of the scalar product $g_1$ to $D.$ The corresponding distribution $D$ on $L^2_1$ is nothing else than the restriction to $L^2_1$ of horizontal distribution of the Levi--Civita connection \cite{Bes} for $L^2.$

It follows from the above facts that hold the next statements.

1) The canonical projection
\begin{equation}
\label{subm}
p: (SO_0(2,1),d)\rightarrow  L^2
\end{equation}
is a \textit{submetry} \cite{BG}, a natural generalization of Riemannian submersion.

2) Identifying $SO_0(2,1)$ with $L^2_1$ by the map $f$, for any piece-wise horizontal (i.e. tangent to $D$) smooth path
$\gamma(t)$ , $0\leq t \leq t_1,$ in $SO_0(2,1)$, $f(\gamma(t))$ with $0\leq t \leq t_1$ is a parallel vector field
(in the \textit{Lobachevskii plane!}) along the projection $p(\gamma(t))$,
$0\leq t \leq t_1,$ in the sense of \cite{Pog}, with initial unit tangent vector $f(\gamma(0))=d\gamma(0)(v_0)\in L^2_1$, see \cite{BerZ}.

3) Additionally, if $p(\gamma(t))$, $0\leq t \leq t_1$, is a closed path with no self-intersection, bounding a region in
$L^2$ with the area $S\leq \pi,$ then $S=\rho(f(\gamma(0)),f(\gamma(t_1)))$; at the same time a preimage  $p^{-1}(x)$ of any point $x\in L^2$ is a circle with the length $2\pi$ in the metric $\rho$, $(p\circ f)^{-1}(x)=gSO(2)$ if $p(f(g))=x$, and the metric $\rho\circ (f\times f)$ is invariant relative to the right multiplication by elements of the subgroup $SO(2)\subset SO_0(2,1).$

4) The projection (\ref{subm}) maps geodesics in the space $(SO_0(2,1),d)$ equilongally to the curves of constant geodesic curvature, namely,  geodesics, equidistant curves, horocycles, or circles in $L^2$. Every line segment of such curve is a solution of corresponding isoperimetric Dido's problem in $L^2;$ in other words, in view of 3), is a solution of some isoholonomic problem.

\section{Preliminaries}

In this section, we will recall necessary notions and results from the paper \cite{Ber}.

The \textit{pseudoeuclidean space} $E^{n,1}$ or the \textit{Minkowski space-time} $\Mink^{n+1}$, where $n+1\geq 2,$  is the vector space $\mathbb{R}^{n+1}$ with \textit{pseudoscalar product} $\{(t,x),(s,y)\}:=-ts+(x,y).$ Here $(x,y)=xy^T$ is the \textit{standard scalar product of vectors} $x,y\in \mathbb{R}^n$, $T$ is the transposing operator. The \textit{Lorentz group} $SO_0(n,1)$ is the connected component of the unit in the group $P(n,1)$ of all linear \textit{pseudoisometric} (i.e., preserving the pseudoscalar product
$\{\cdot,\cdot\}$) transformations of the space-time $\Mink^{n+1}.$

In the canonical base of the  space $\mathbb{R}^{n+1},$ the elements of the group $P(n,1)$ are given by real
$(n+1)\times (n+1)$-matrixes $C$ with condition
\begin{equation}
\label{cps}
C^{-1}=IC^TI,
\end{equation}
where $I=-1\otimes E_n$ is the matrix of the \textit{time reversing operator} $(t,x)\rightarrow (-t,x)$.

\begin{remark}
\label{rem1}
The group $SO_0(n,1)$ consists of those elements in $P(n,1)$ which
simul-\,taneously preserve the time direction and orientation of the space $E^{n,1}$,  i.e. are given by matrices $C$ with conditions $c_{11}\geq 1$ and $\det(C)=1$. Moreover, $C\in SO(n):=1\otimes SO(n)$ if and only if $C\in SO_0(n,1)$ and $c_{11}=1.$ The matrix $C\in SO_0(n,1)$ commutes with $I$ if and only if $C\in SO(n).$
\end{remark}

The Lie algebra $\frak{so}(n,1)$ of the Lie groups $P(n,1)$ and $SO_0(n,1)$ is defined by the equality
\begin{equation}
\label{r1}
\frak{so}(n,1)=I\cdot\frak{so}(n+1),
\end{equation}
where $\frak{so}(n+1)$ is the Lie algebra of the Lie group $SO(n+1),$ consisting of all real skew-symmetric
$(n+1)\times (n+1)$-matrices.

We shall be interested in the case $n=2.$ In view of the equality (\ref{r1}), the matrices
\begin{equation}
\label{abc}
a=\left(\begin{array}{ccc}
0 & 1 & 0 \\
1 & 0 & 0 \\
0 & 0 & 0
\end{array}\right),\quad
b=\left(\begin{array}{ccc}
0 & 0 & 1 \\
0 & 0 & 0 \\
1 & 0 & 0
\end{array}\right),\quad
c=\left(\begin{array}{ccc}
0 & 0 & 0 \\
0 & 0 & -1 \\
0 & 1 & 0
\end{array}\right)
\end{equation}
constitute a basis of the Lie algebra $\frak{so}(2,1).$

Let $e$ be the unit of $SO_0(2,1)$, $\Delta(e)$ denotes the linear span of the vectors $a,\, b$.
Let us define the scalar product $\langle\cdot,\cdot\rangle$ on $D(e)$
with the orthonormal basis $a,\,b$. It follows from (\ref{abc}) that
\begin{equation}
\label{lbr}
[a,b]=-c,\quad [b,c]=a,\quad [c,a]=b.
\end{equation}
In consequence of relations (\ref{lbr}), the next statements hold.

1. The left-invariant disribution $D$ on the Lie group $SO_0(2,1)$ with given $D(e)$ is totally nonholonomic, and the pair $(D(e),\langle\cdot,\cdot\rangle)$ defines the left-invariant sub-Riemannian metric  $d$ on $SO_0(2,1).$

2. $D(e)$ and $\langle\cdot,\cdot\rangle$ are invariant relative to the group $\Ad(SO(2)),$ $SO(2)\subset SO_0(2,1).$

3. The metric $d$ is invariant relative to conjugation of the group $SO_0(2,1)$ by elements of the subgroup $SO(2).$

4. The metric $d$  is invariant relative to the right shifts of the group $SO_0(2,1)$ by elements of the subgroup $SO(2).$

The statements 1 and 4 are equivalent to the fact that $d$ is an invariant sub-Riemannian metric on  weakly symmetric
space $(SO_0(2,1)\times SO(2))/SO(2).$ The notion of  weakly symmetric space was introduced by A.~Selberg in his paper
\cite{Selb}; $(SL(2)\times SO(2))/SO(2)$ is unique weakly symmetric nonsymmetric space considered by him in the paper
\cite{Selb}.

\begin{theorem}
\label{theor1}
On $(SO_0(2,1),d),$ any geodesic $\gamma=\gamma(t)=\gamma(\beta,\phi;t),$ $t\in \mathbb{R},$ $\gamma(0)=e$, parametrized by the arclength is the product of two 1--parameter subgroups:
\begin{equation}
\label{sol}
\gamma(t)=\exp(t(\cos\phi\cdot a + \sin \phi\cdot b -\beta c)) \exp(t\beta c),
\end{equation}
where $\phi$ and $\beta$ are arbitrary constants.
\end{theorem}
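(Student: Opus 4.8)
\noindent\emph{Proof proposal.}
The plan is to obtain \eqref{sol} from the Pontryagin Maximum Principle (PMP) for the left-invariant length-minimization problem
\[
\dot\gamma(t)=\gamma(t)\bigl(u_1(t)\,a+u_2(t)\,b\bigr),\qquad \gamma(0)=e,\qquad \int_0^{t_1}\sqrt{u_1^2+u_2^2}\,dt\longrightarrow\min .
\]
Since $a$, $b$ and $[a,b]=-c$ together span $\frak{so}(2,1)$, the distribution $D$ is a contact distribution, hence it admits no nonconstant abnormal extremals; so every geodesic is normal and it suffices to integrate the normal Hamiltonian system. Trivializing $T^{*}SO_0(2,1)$ by left translations, let $h_a$, $h_b$, $h_c$ be the Hamiltonians linear on the fibres and dual to $a$, $b$, $c$; the normal maximized Hamiltonian is $H=\tfrac12(h_a^2+h_b^2)$, the optimal controls are $u_1=h_a$, $u_2=h_b$, and the arclength parametrization corresponds to the level $H\equiv\tfrac12$, i.e.\ $h_a^2+h_b^2\equiv1$.

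Next I would write out the vertical subsystem $\dot h_x=\{H,h_x\}$ using the Lie--Poisson rule $\{h_x,h_y\}=h_{[x,y]}$ together with the bracket relations \eqref{lbr}. A one-line computation gives $\dot h_c=0$, so $h_c$ is a first integral; set $\beta:=h_c$. On the circle $h_a^2+h_b^2=1$ write $h_a=\cos\theta$, $h_b=\sin\theta$; the two remaining equations then collapse to $\dot\theta=-\beta$, whence $\theta(t)=\phi-\beta t$ with $\phi:=\theta(0)$. Consequently the horizontal velocity of the geodesic, read in the left-invariant frame, is $\cos(\phi-\beta t)\,a+\sin(\phi-\beta t)\,b$.

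It remains to integrate $\dot\gamma=\gamma\bigl(\cos(\phi-\beta t)\,a+\sin(\phi-\beta t)\,b\bigr)$, $\gamma(0)=e$. The device is to seek the solution as $\gamma(t)=\gamma_1(t)\exp(t\beta c)$; then
\[
\gamma^{-1}\dot\gamma=\Ad\!\bigl(\exp(-t\beta c)\bigr)\bigl(\gamma_1^{-1}\dot\gamma_1\bigr)+\beta c .
\]
By \eqref{lbr} one has $[c,a]=b$, $[c,b]=-a$, so $\ad_c$ is the rotation generator on $D(e)$ and $\Ad(\exp(sc))$ rotates $D(e)$ through the angle $s$ while fixing $c$. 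Choosing the \emph{constant} element $\gamma_1^{-1}\dot\gamma_1=\cos\phi\cdot a+\sin\phi\cdot b-\beta c$, the right-hand side becomes $\Ad(\exp(-t\beta c))(\cos\phi\cdot a+\sin\phi\cdot b)-\beta c+\beta c=\cos(\phi-t\beta)\,a+\sin(\phi-t\beta)\,b$, which is exactly the required horizontal velocity. Hence $\gamma_1(t)=\exp\bigl(t(\cos\phi\cdot a+\sin\phi\cdot b-\beta c)\bigr)$ and $\gamma(t)$ is the product \eqref{sol}. Running the same computation backwards shows, conversely, that every curve of the form \eqref{sol} satisfies the normal Hamiltonian equations and is therefore a geodesic, which completes the characterization.

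The whole argument is elementary; the only step where care is genuinely needed is the sign bookkeeping in the Lie--Poisson bracket and in the $\Ad$-action (the alternative normalization $\{h_x,h_y\}=-h_{[x,y]}$ merely replaces $\beta$ by $-\beta$), and the check that the reparametrizing factor $\exp(t\beta c)$ matches so that the two one-parameter subgroups combine into precisely \eqref{sol}. I do not expect a conceptual obstacle: the contact condition kills abnormal minimizers, and the vertical subsystem is explicitly integrable because $h_c$ is conserved.
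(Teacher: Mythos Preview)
Your argument is correct: the contact condition rules out abnormal extremals, the vertical Lie--Poisson system reduces to $\dot h_c=0$ and $\dot\theta=-\beta$, and the factorization $\gamma(t)=\gamma_1(t)\exp(t\beta c)$ turns the horizontal equation into a constant-coefficient one on $\gamma_1$, yielding \eqref{sol}. The sign checks (in particular $\Ad(\exp(-t\beta c))(\cos\phi\,a+\sin\phi\,b)=\cos(\phi-\beta t)\,a+\sin(\phi-\beta t)\,b$ from \eqref{lbr}) go through as you wrote them.

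As for comparison: the present paper does not prove Theorem~\ref{theor1} at all; it is quoted in the ``Preliminaries'' section as a result of \cite{Ber}. So there is no in-paper proof to set yours against. Your PMP derivation is the standard route for left-invariant sub-Riemannian problems and is entirely in the spirit of \cite{Ber}; the only comment worth adding is that the structure here --- $d$ is also right-invariant under $SO(2)$ and $\Ad(SO(2))$ acts on $D(e)$ by rotations --- is precisely what makes the two-exponential ansatz natural, and you might say so explicitly rather than presenting it as a trick. Otherwise nothing is missing.
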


The next corollary follows from here and the above properties of the metric $d.$

\begin{corollary}
\label{go}
The space $(SO_0(2,1),d)$ is geodesic orbit, i.e.,
every (full) geodesic in $(SO_0(2,1),d)$ is an orbit of  some  1--parameter subgroup of isometries of the space
$(SO_0(2,1),d)$.
\end{corollary}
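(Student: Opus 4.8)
The plan is to exhibit, for each geodesic in \eqref{sol}, an explicit one-parameter subgroup of the isometry group of $(SO_0(2,1),d)$ whose orbit through $e$ is that geodesic, and then to transport this description to an arbitrary geodesic by left translations. First I would record the two families of isometries already supplied by the properties listed above: by left-invariance of $d$ every left translation $L_g$, $g\in SO_0(2,1)$, is an isometry, and by property 4 every right translation $R_h$, $h\in SO(2)=1\otimes SO(2)$, is an isometry; moreover $L_g$ and $R_h$ commute for all $g\in SO_0(2,1)$, $h\in SO(2)$.

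Next, fix a geodesic $\gamma(t)=\gamma(\beta,\phi;t)=\exp(tX)\exp(t\beta c)$ with $X=\cos\phi\cdot a+\sin\phi\cdot b-\beta c$. Since $c$ spans the Lie algebra of $SO(2)=1\otimes SO(2)$, the curve $t\mapsto\exp(t\beta c)$ is a one-parameter subgroup lying in $SO(2)$, so each $R_{\exp(t\beta c)}$ is an isometry, while each $L_{\exp(tX)}$ is an isometry as a left translation. Set
\begin{equation*}
\psi_t:=L_{\exp(tX)}\circ R_{\exp(t\beta c)},\qquad \psi_t(g)=\exp(tX)\,g\,\exp(t\beta c).
\end{equation*}
Using that left and right translations commute and that $\exp(sX)\exp(tX)=\exp((s+t)X)$, $\exp(s\beta c)\exp(t\beta c)=\exp((s+t)\beta c)$, one checks at once that $\psi_s\circ\psi_t=\psi_{s+t}$ and $\psi_0=\mathrm{id}$, so $\{\psi_t\}_{t\in\mathbb{R}}$ is a one-parameter subgroup of $\mathrm{Isom}(SO_0(2,1),d)$. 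Finally $\psi_t(e)=\exp(tX)\exp(t\beta c)=\gamma(t)$, so $\gamma$ is precisely the $\psi_t$-orbit of $e$.

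To finish, I would reduce an arbitrary full geodesic $\sigma$ to this case. By left-invariance of $d$ and Theorem \ref{theor1}, after a shift of the parameter $\sigma$ has the form $\sigma(t)=g_0\,\gamma(\beta,\phi;t)$ for suitable $g_0$, $\beta$, $\phi$. Then $\sigma$ is the orbit of $g_0$ under $L_{g_0}\circ\psi_t\circ L_{g_0}^{-1}$; since $L_{g_0}$ commutes with the right-translation part, this conjugate equals $L_{g_0\exp(tX)g_0^{-1}}\circ R_{\exp(t\beta c)}=L_{\exp(t\,\Ad(g_0)X)}\circ R_{\exp(t\beta c)}$, which is again a one-parameter subgroup of isometries of exactly the same type. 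Hence every full geodesic is an orbit of a one-parameter subgroup of isometries, which is the assertion.

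I do not anticipate a serious obstacle. The two points that need a little care are (i) verifying that $\psi_t$ is genuinely a one-parameter subgroup of the full isometry group — which rests only on the commutation of left and right translations and on $\exp(t\beta c)$ remaining inside $SO(2)$ — and (ii) the homogeneity reduction for an arbitrary geodesic. Both steps are short; the substantive input is Theorem \ref{theor1}, which already presents the geodesics in the product form $\exp(tX)\exp(t\beta c)$ that makes the subgroup $\psi_t$ manifest.
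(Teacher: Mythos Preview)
Your argument is correct and is precisely the approach the paper has in mind: the paper does not give a separate proof but simply states that the corollary ``follows from here and the above properties of the metric $d$,'' meaning from the product form in Theorem~\ref{theor1} together with left-invariance and $SO(2)$-right-invariance. You have merely made that deduction explicit by exhibiting $\psi_t=L_{\exp(tX)}\circ R_{\exp(t\beta c)}$ and then conjugating by a left translation, which is exactly the intended reasoning.
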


\begin{remark}
\label{rem2}
To change a sign of $\beta$ in (\ref{sol}) is the same as to change simultaneously a sign of $t$
and to change an angle $\phi$ by the angle $\phi\pm \pi.$
\end{remark}

\begin{theorem}
\label{theor2}
Put
\begin{equation}
\label{mn1}
m=t,\quad n=\frac{t^2}{2}\quad\text{if}\quad\mid\beta\mid=1,
\end{equation}
\begin{equation}
\label{mn2}
m=\frac{\Sh{(t\sqrt{1-\beta^2})}}{\sqrt{1-\beta^2}},\quad
n=\frac{\Ch{(t\sqrt{1-\beta^2})-1}}{1-\beta^2}\quad\text{if}\quad\mid\beta\mid<1,
\end{equation}
\begin{equation}
\label{mn3}
m=\frac{\sin{(t\sqrt{\beta^2-1})}}{\sqrt{\beta^2-1}},\quad
n=\frac{1-\cos{(t\sqrt{\beta^2-1})}}{\beta^2-1}\quad\text{if}\quad\mid\beta\mid>1.
\end{equation}
Then the geodesic of the left-invariant sub-Riemannian metric $d$ on the Lie group $SO_0(2,1)$ (see \,Theorem~\ref{theor1}) is equal to  $\gamma(t)=(\gamma_1(t),\gamma_2(t),\gamma_3(t)),$ where the columns $\gamma_j(t),$
$j=1,2,3,$ are given by formulas
\begin{equation}
\label{geod1}
\gamma_1(t)={\left(\begin{array}{c}
1+n \\
m\cos\phi+\beta n\sin\phi \\
m\sin\phi-\beta n\cos\phi
\end{array}\right),}
\end{equation}
\begin{equation}
\label{geod2}
\gamma_2(t)={\left(\begin{array}{c}
m\cos(\beta t-\phi)+\beta n\sin(\beta t-\phi)\\
n\cos(\beta t-\phi)\cos\phi+\beta m\sin\beta t+(1-\beta^2n)\cos{\beta t}\\
n\cos(\beta t-\phi)\sin\phi-\beta m\cos\beta t+(1-\beta^2n)\sin\beta t
\end{array}\right),}
\end{equation}
\begin{equation}
\label{geod3}
\gamma_3(t)={\left(\begin{array}{c}
\beta n\cos(\beta t-\phi) -m\sin(\beta t-\phi)\\
-n\sin(\beta t-\phi)\cos\phi+\beta m\cos\beta t - (1-\beta^2n)\sin\beta t \\
-n\sin(\beta t-\phi)\sin\phi+\beta m\sin\beta t+(1-\beta^2n)\cos{\beta t}
\end{array}\right).}
\end{equation}
\end{theorem}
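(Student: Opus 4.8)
The plan is to compute both factors in the product $\gamma(t)=\exp(tX)\exp(t\beta c)$ of Theorem~\ref{theor1} explicitly, where $X:=\cos\phi\cdot a+\sin\phi\cdot b-\beta c$. First I would write $X$ in the basis (\ref{abc}),
\[
X=\left(\begin{array}{ccc}0&\cos\phi&\sin\phi\\ \cos\phi&0&\beta\\ \sin\phi&-\beta&0\end{array}\right),
\]
and compute its characteristic polynomial. Since $\trace X=0$, $\det X=0$, and the sum of its principal $2\times2$ minors equals $\beta^2-1$, the characteristic polynomial is $\lambda^3+(\beta^2-1)\lambda$, so by the Cayley--Hamilton theorem
\[
X^3=(1-\beta^2)X.
\]

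This cubic relation reduces every power of $X$ to a combination of $X$ and $X^2$: for $k\ge1$ one has $X^{2k+1}=(1-\beta^2)^kX$ and $X^{2k}=(1-\beta^2)^{k-1}X^2$. Substituting this into the exponential series and collecting the odd and the even terms gives
\[
\exp(tX)=E_3+mX+nX^2,\qquad m=\sum_{k\ge0}\frac{(1-\beta^2)^kt^{2k+1}}{(2k+1)!},\quad n=\sum_{k\ge1}\frac{(1-\beta^2)^{k-1}t^{2k}}{(2k)!}.
\]
Summing these two series in closed form, separately for $1-\beta^2$ positive, zero and negative, yields exactly the three cases (\ref{mn1}), (\ref{mn2}), (\ref{mn3}) for $m$ and $n$: for $|\beta|=1$ only the terms $k=0$ and $k=1$ survive; for $|\beta|<1$ one recognises $\Sh$ and $\Ch$ of $t\sqrt{1-\beta^2}$; for $|\beta|>1$ one recognises $\sin$ and $\cos$ of $t\sqrt{\beta^2-1}$.

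It then remains to compute $X^2$ by a direct multiplication and to note that $\exp(t\beta c)$ is the rotation by the angle $\beta t$ in the $(e_2,e_3)$-plane, with first column $e_1$. Multiplying $\exp(tX)=E_3+mX+nX^2$ on the right by this rotation, the first column of the product is unchanged and equals $(1+n,\ m\cos\phi+\beta n\sin\phi,\ m\sin\phi-\beta n\cos\phi)^T$, i.e.\ $\gamma_1(t)$; the second column equals $\cos\beta t$ times the second column of $E_3+mX+nX^2$ plus $\sin\beta t$ times its third column, and the third column is the analogous combination with coefficients $-\sin\beta t$ and $\cos\beta t$. Applying the addition formulas such as $\cos\beta t\cos\phi+\sin\beta t\sin\phi=\cos(\beta t-\phi)$ to collect the arguments $\beta t-\phi$, these two columns become precisely $\gamma_2(t)$ and $\gamma_3(t)$ from (\ref{geod2}), (\ref{geod3}).

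There is no conceptual obstacle here; the proof is entirely computational, and the only real work is careful bookkeeping. I expect the two delicate points to be: (i) verifying that the single identity $\exp(tX)=E_3+mX+nX^2$ indeed specializes to all three prescriptions (\ref{mn1})--(\ref{mn3}) once the power series for $m$ and $n$ are summed; and (ii) the final matrix product, where the trigonometric identities must be applied without error to bring every entry into the stated form. A convenient consistency check along the way is that the resulting matrix has first row $(1+n,\ast,\ast)$ and, being a product of two exponentials of elements of $\frak{so}(2,1)$, automatically lies in $SO_0(2,1)$.
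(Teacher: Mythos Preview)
The paper does not actually prove Theorem~\ref{theor2}: it is one of the ``necessary notions and results from the paper~\cite{Ber}'' recalled without proof in Section~1. Your computation is correct and is the natural route to these formulas. The Cayley--Hamilton relation $X^3=(1-\beta^2)X$ collapses $\exp(tX)$ to $E_3+mX+nX^2$; the power series you write for $m$ and $n$ sum to (\ref{mn1})--(\ref{mn3}) in the three regimes exactly as you describe; and right multiplication by the rotation $\exp(t\beta c)$, together with the addition formulas, indeed yields (\ref{geod1})--(\ref{geod3}) column by column. One small wording point: for $|\beta|=1$ it is only the $k=0$ term that survives in the $m$-series and only the $k=1$ term in the $n$-series, but your conclusions $m=t$, $n=t^2/2$ are correct. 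This is almost certainly the same computation carried out in~\cite{Ber}.
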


In consequence of (\ref{geod1}), (\ref{geod2}), (\ref{geod3}), we have
\begin{equation}
\label{n}
n=c_{11}-1;
\end{equation}
\begin{equation}
\label{sistem1}
\left\{\begin{array}{rl}
c_{22}+c_{33}=2\beta m\sin{\beta t}+(2+n-2\beta^2n)\cos{\beta t}, \\
c_{32}-c_{23}=(2+n-2\beta^2n)\sin{\beta t}-2\beta m\cos{\beta t};
\end{array}\right.
\end{equation}
\begin{equation}
\label{sistem2}
c_{22}-c_{33}=n\cos (\beta t-2\phi),\quad
c_{23}+c_{32}=-n\sin (\beta t-2\phi);
\end{equation}
\begin{equation}
\label{system3}
c^2_{11}-1=c^2_{21}+c^2_{31}=c^2_{12}+c^2_{13}=m^2+\beta^2 n^2.
\end{equation}
It follows from (\ref{n}), (\ref{sistem2}) that
\begin{equation}
\label{nsq}
n^2=n^2(c)=(c_{11}-1)^2=(c_{22}-c_{33})^2+(c_{23}+c_{32})^2.
\end{equation}

\begin{proposition}
\label{so2}
$C\in SO(2)$ $\Longleftrightarrow$ $C\in SO_0(2,1),$  $n(C)=0$.
\end{proposition}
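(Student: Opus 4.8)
The plan is to reduce the claim to Remark~\ref{rem1}, which already characterises $1\otimes SO(2)$ inside $SO_0(2,1)$ by the single scalar equation $c_{11}=1$. So everything comes down to the equivalence, for $C\in SO_0(2,1)$, of $n(C)=0$ and $c_{11}=1$. One clean way to obtain it is to note that every $C\in SO_0(2,1)$ is joined to $e$ by a geodesic of $(SO_0(2,1),d)$ (the distribution $D$ is totally nonholonomic, so by Chow--Rashevskii and the sub-Riemannian Hopf--Rinow theorem $C$ is the endpoint of a shortest arc, which has the form of Theorem~\ref{theor1}); then (\ref{n}) gives $n(C)=c_{11}-1$, and $n\ge0$ by (\ref{mn1})--(\ref{mn3}). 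Equivalently one may simply use (\ref{nsq}), $n^2(C)=(c_{22}-c_{33})^2+(c_{23}+c_{32})^2=(c_{11}-1)^2$, a statement purely about matrix entries. Either way $n(C)=0\iff c_{11}=1$, and Remark~\ref{rem1} closes both implications.

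To keep the argument self-contained and avoid the ``every point lies on a geodesic'' input, I would instead compute directly with the pseudoorthogonality relations. Relation (\ref{cps}), together with $I^2=E$ and $I=\operatorname{diag}(-1,1,1)$, is equivalent to $CIC^T=I$ and implies as well $C^TIC=I$. Assume $n(C)=0$, i.e. $c_{22}=c_{33}$ and $c_{23}=-c_{32}$. The $(2,3)$-entry of $CIC^T=I$ then reads $-c_{21}c_{31}+c_{22}c_{32}+c_{23}c_{33}=-c_{21}c_{31}=0$, while the $(2,2)$- and $(3,3)$-entries give $c_{21}^2=c_{22}^2+c_{23}^2-1=c_{31}^2$; hence $c_{21}=c_{31}=0$. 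Now the $(1,1)$-entry of $C^TIC=I$ is $-c_{11}^2+c_{21}^2+c_{31}^2=-1$, so $c_{11}^2=1$, and $c_{11}\ge1$ forces $c_{11}=1$; Remark~\ref{rem1} then gives $C\in1\otimes SO(2)$. Conversely, for $C\in1\otimes SO(2)$ we have $C=\operatorname{diag}(1,R)$ with $R\in SO(2)$, so $c_{22}=c_{33}$, $c_{23}=-c_{32}$, $C\in SO_0(2,1)$, and thus $n(C)=0$.

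I do not expect a genuine obstacle here: the Proposition is essentially Remark~\ref{rem1} read through the identity $n=c_{11}-1$. The only subtlety is that $n$ must be treated as a well-defined function on all of $SO_0(2,1)$, not just along geodesics from $e$; that is precisely why I would argue via (\ref{nsq}) or via the short matrix computation above, both of which are intrinsic to the matrix $C$.
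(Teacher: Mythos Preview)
Your proof is correct and matches the paper's intent: Proposition~\ref{so2} is stated there without proof, as an immediate consequence of Remark~\ref{rem1} together with the identity $n=c_{11}-1$ from~(\ref{n}) and~(\ref{nsq}). Your additional self-contained verification via the relations $CIC^T=I$ and $C^TIC=I$ is a clean alternative that sidesteps the question of whether $n(C)$ is well-defined off geodesics, a point the paper addresses only implicitly (cf.\ the proof of Proposition~\ref{arb}, where it is asserted that every $C\in SO_0(2,1)$ can be written in the form~(\ref{geod1})--(\ref{geod3})).
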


\begin{proposition}
\label{beta0}
If $\beta=0$ then every segment  $\gamma(t)=\gamma(0,\phi;t)$, $0\leq t\leq t_1$, is a shortest arc.
\end{proposition}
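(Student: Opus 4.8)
The plan is to combine two soft facts: that the canonical projection $p\colon(SO_0(2,1),d)\to L^2$ is a submetry (item 1) of the Introduction), hence $1$-Lipschitz, and that geodesics of the Lobachevskii plane $L^2$ are globally shortest, $L^2$ being a Hadamard manifold. Since $d$ is left-invariant and $\gamma(0,\phi;t)=\exp(t(\cos\phi\cdot a+\sin\phi\cdot b))$ is a one-parameter subgroup, it suffices to show $d(e,\gamma(t_1))=t_1$ for every $t_1>0$. The bound $d(e,\gamma(t_1))\le t_1$ is immediate: $\gamma|_{[0,t_1]}$ is tangent to $D$ (its velocity is at each moment the left translate of $\cos\phi\cdot a+\sin\phi\cdot b\in D(e)$) and has unit speed, so it is an admissible path of length $t_1$. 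The whole content is therefore in the reverse bound.

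For the reverse bound I would argue as follows. Since $p$ is a submetry, $d(e,\gamma(t_1))\ge\rho(p(e),p(\gamma(t_1)))$, where $\rho$ denotes the Riemannian distance of $L^2$ (Gaussian curvature $-1$). Because $p$ is a Riemannian submersion and $\gamma$ is horizontal of unit speed, $p\circ\gamma$ is parametrized by arclength in $L^2$; granting that $p\circ\gamma$ is a \emph{geodesic} of $L^2$ when $\beta=0$, the restriction $p\circ\gamma|_{[0,t_1]}$ is a unit-speed geodesic segment in the Hadamard manifold $L^2$, hence minimizing, so $\rho(p(e),p(\gamma(t_1)))=t_1$. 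Chaining the inequalities, $t_1\ge d(e,\gamma(t_1))\ge\rho(p(e),p(\gamma(t_1)))=t_1$, which forces $d(e,\gamma(t_1))=t_1$; thus $\gamma|_{[0,t_1]}$ realizes the distance and is a shortest arc.

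The only step carrying real content is the claim that for $\beta=0$ the projected curve $p\circ\gamma$ is a geodesic of $L^2$: a priori, by item 4) of the Introduction, it could be an equidistant curve, a horocycle, or a circle, so one must check that the relevant curvature parameter vanishes exactly when $\beta=0$. I would verify this directly from Theorem~\ref{theor2}. Setting $\beta=0$ in (\ref{mn2}) gives $m=\Sh t$, $n=\Ch t-1$, so that the first column is $\gamma_1(t)=(\Ch t,\ \Sh t\cos\phi,\ \Sh t\sin\phi)^T$. Representing $p(\gamma(t))$ in the hyperboloid model of $L^2$ with base point $(1,0,0)^T$, this column lies in the $2$-plane through the origin spanned by $(1,0,0)^T$ and $(0,\cos\phi,\sin\phi)^T$, i.e. on a geodesic of that model, and $\{\dot\gamma_1(t),\dot\gamma_1(t)\}=-\Sh^2 t+\Ch^2 t=1$ confirms the arclength parametrization. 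This identification of the $\beta=0$ projection with a Lobachevskii geodesic is the main obstacle; the rest is the routine juxtaposition of ``submetries contract distances'' with ``geodesics minimize in nonpositive curvature''.
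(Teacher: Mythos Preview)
Your argument is correct. Note, however, that the paper does not give its own proof of this proposition: it appears in Section~1 (Preliminaries), which explicitly recalls results from~\cite{Ber} without reproving them. That said, your proof is exactly the one the paper's framework invites. Item~1) of the Introduction supplies the submetry (hence the $1$-Lipschitz bound $d\ge\rho\circ(p\times p)$), item~4) asserts that $p\circ\gamma$ has constant geodesic curvature in $L^2$, and the sentence opening Section~4 (``$|\beta|$, the geodesic curvature of the projection $p(\gamma(t))$'') identifies that curvature with $|\beta|$; for $\beta=0$ the projection is therefore a Lobachevskii geodesic, globally minimizing since $L^2$ is Hadamard. Your direct verification via $\gamma_1(t)=(\Ch t,\,\Sh t\cos\phi,\,\Sh t\sin\phi)^T$ is a clean way to bypass the appeal to~\cite{Ber} for the $\beta=0$ case; the only point you might make explicit is why the first column of $\gamma(t)$ represents $p(\gamma(t))$ in the hyperboloid model (right multiplication by $k\in 1\otimes SO(2)$ fixes the first column, so $C\mapsto Ce_1$ factors through $SO_0(2,1)/SO(2)$).
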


\begin{proposition}
\label{T}
For every geodesic $\gamma(\beta,\phi;t),$ $t\in \mathbb{R},$ where
$\beta\neq 0,$ there exists a finite number $T>0$ such that  $\gamma(\beta,\phi;t),$ $0\leq t\leq T,$ is a  noncontinuable shortest arc.
\end{proposition}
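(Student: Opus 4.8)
The plan is to show that for $\beta\neq0$ the distance $d(e,\gamma(t))$ grows strictly more slowly than the length $t$ of the segment $\gamma|_{[0,t]}$, so that $\gamma$ stops being a shortest arc after some finite time $t^{*}$; then the number $T$ is obtained as the largest time up to which $\gamma$ is still a shortest arc. Two ingredients are needed: an \emph{exact} formula for the Lobachevskii distance that the projection $p$ of (\ref{subm}) produces along $\gamma$, and an \emph{upper} bound for $d(e,\cdot)$ coming from the isoholonomic interpretation recalled in the Introduction.

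First I would compute the distance ``downstairs''. Realizing $L^{2}$ as the upper sheet $\{v\in\Mink^{3}:\{v,v\}=-1,\ v_{1}\geq1\}$ of the hyperboloid with base point $w_{0}=(1,0,0)$, the projection $p$ sends $g\in SO_{0}(2,1)$ to its first column $gw_{0}$, and the hyperbolic distance satisfies $\Ch\rho_{L^{2}}(w_{0},gw_{0})=-\{w_{0},gw_{0}\}=g_{11}$. Applying this to $g=\gamma(t)$ and using (\ref{n}) gives $\rho_{L^{2}}(w_{0},p(\gamma(t)))=\Arch(1+n(t))$. An elementary asymptotic analysis of $n(t)$ then shows: from (\ref{mn2}), $\Arch(1+n(t))=\sqrt{1-\beta^{2}}\,|t|+O(1)$ when $|\beta|<1$; from (\ref{mn1}), $\Arch(1+n(t))=2\Arsh(|t|/2)=O(\ln(2+|t|))$ when $|\beta|=1$; and from (\ref{mn3}), $0\leq n(t)\leq 2/(\beta^{2}-1)$, so $\Arch(1+n(t))$ stays bounded when $|\beta|>1$. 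Hence for every $\beta\neq0$ there are a slope $\lambda=\lambda(\beta)\in(0,1)$ and a constant $c_{0}=c_{0}(\beta)$ with $\rho_{L^{2}}(w_{0},p(\gamma(t)))\leq\lambda\,t+c_{0}$ for all $t\geq0$.

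Next I would bound $d(e,g)$ from above ``upstairs''. By the interpretation in items 1)--3) of the Introduction, a horizontal curve joining $e$ to $g$ corresponds to a curve $c$ in $L^{2}$ from $w_{0}$ to $p(g)$ along which the parallel transport of $v_{0}$ ends at $f(g)$, and its length equals the $L^{2}$-length of $c$. Taking $c$ to be a small loop at $w_{0}$ bounding a region of area $\theta\in[0,2\pi)$, followed by a minimizing geodesic of $L^{2}$ from $w_{0}$ to $p(g)$, and choosing $\theta$ (Gauss--Bonnet: the holonomy angle equals the enclosed area) so that the total parallel transport of $v_{0}$ matches $f(g)$, one obtains $d(e,g)\leq\rho_{L^{2}}(w_{0},p(g))+L_{0}$, where $L_{0}=2\pi\sqrt{3}$ is a crude bound for the perimeter of a region of area $<2\pi$ in $L^{2}$. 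Combined with the previous step, $d(e,\gamma(t))\leq\lambda\,t+c_{0}+L_{0}$; since $\gamma$ is parametrized by arclength (Theorem~\ref{theor1}), the length of $\gamma|_{[0,t]}$ is $t$, which exceeds $\lambda\,t+c_{0}+L_{0}$ once $t>t^{*}:=(c_{0}+L_{0})/(1-\lambda)$. Thus $\gamma|_{[0,t]}$ is not a shortest arc for $t>t^{*}$.

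Finally, set $T:=\sup\{t\geq0:\gamma|_{[0,t]}\text{ is a shortest arc}\}$. Since any subsegment of a shortest arc is one, this set is an interval containing $0$; it has positive length because $\gamma$, being a normal geodesic, is locally minimizing; and it is bounded by $t^{*}<\infty$. Hence $0<T\leq t^{*}<\infty$. For $t<T$ one has $d(e,\gamma(t))=t$; letting $t\uparrow T$ and using that $d(e,\cdot)$ is $1$-Lipschitz gives $d(e,\gamma(T))=T$, so $\gamma|_{[0,T]}$ is itself a shortest arc, and it is noncontinuable as such because $\gamma|_{[0,t]}$ fails to be a shortest arc for every $t>T$. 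I expect the main obstacle to be the isoholonomic upper bound of the third paragraph: one must check carefully that any prescribed holonomy over a contractible loop in $L^{2}$ is realized by a loop of length at most $2\pi\sqrt{3}$, and that concatenating such a loop with a minimizing geodesic of $L^{2}$ genuinely lifts to a horizontal path from $e$ to $g$; the asymptotic estimates for $\Arch(1+n(t))$ and the soft facts used at the end (local minimality of normal geodesics, continuity of the distance) are routine.
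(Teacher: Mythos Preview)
The paper does not give its own proof of this proposition: it is stated in Section~1 (Preliminaries) among the facts ``recalled'' from~\cite{Ber}, with no argument. So there is nothing in the present paper to compare your proof against directly.

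Your argument is correct and entirely in the spirit of the machinery the paper (and~\cite{Ber}) sets up in the Introduction. The identification $p(g)=gw_0=$ first column of $g$ and $\ch\rho_{L^2}(w_0,p(g))=g_{11}$ is exactly right, and the three asymptotics for $\Arch(1+n(t))$ from (\ref{mn1})--(\ref{mn3}) are routine; they yield $\rho_{L^2}(w_0,p(\gamma(t)))\le\lambda t+c_0$ with some $\lambda=\lambda(\beta)<1$ whenever $\beta\neq0$. The isoholonomic upper bound $d(e,g)\le\rho_{L^2}(w_0,p(g))+2\pi\sqrt{3}$ is also sound: a circle in $L^2$ of area $\theta\in[0,2\pi)$ has $\ch R=1+\theta/(2\pi)<2$, hence circumference $2\pi\sh R<2\pi\sqrt{3}$; by items 2)--3) of the Introduction its horizontal lift followed by the horizontal lift of the $L^2$-geodesic to $p(g)$ is a horizontal path from $e$ to $g$ once $\theta$ is chosen to match the required holonomy, and $p$ preserves lengths of horizontal curves. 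The closing step (supremum over minimizing times, local minimality of normal sub-Riemannian geodesics, continuity of $d$) is standard. Judging from item~4) of the Introduction and the proof of Proposition~\ref{dist}, the argument in~\cite{Ber} is likewise built on the projection to $L^2$ and the Dido/isoperimetric picture, so your route is presumably close in spirit to the original; your packaging via the single inequality $d(e,\gamma(t))\le\lambda t+c_0+2\pi\sqrt{3}$ with $\lambda<1$ is a clean shortcut to the conclusion.
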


It follows from the properties of the metric $d,$ formula (\ref{sol}) and Corollary \ref{go} that
$T$ does not depend on $\phi,$ i.e., $T=T(\beta)$ and $T(-\beta)=T(\beta)$.
Therefore $T=T(\mid\beta\mid).$

\section{Submanifold $\Sim$ in $SO_0(2,1)$}

\begin{proposition}
\label{symm}
Let $C\in SO_0(2,1)- SO(2)$ be a symmetric matrix. Then
$$c_{22}c_{33}-c_{23}c_{32}=c_{11}> 1; \quad c_{22}+c_{33}=1+c_{11}>2; \quad \min(c_{22},c_{33})\geq 1.$$
Additionally, in the last inequality we have the equality if and only if $c_{23}=c_{32}=0;$
in this case, $\max(c_{22},c_{33})=c_{11}.$ The condition $c_{23}=c_{32}=0$ is also equivalent to the condition
$c_{12}c_{13}=c_{21}c_{31}=0.$ Moreover, the eigenvalues of $(2\times 2)$--matrix, bordered by the first row and the first column of the matrix $C$,  are equal to $1$ and $c_{11}.$
\end{proposition}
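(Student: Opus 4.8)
The plan is to extract everything from the defining relation (\ref{cps}) once it is specialized to a symmetric matrix. Putting $C^T=C$ in (\ref{cps}) and using $I^{-1}=I$ gives $C^{-1}=ICI$, hence $E=C\cdot ICI=(CI)^2$; equivalently $CIC=I$. Also, since $C\notin SO(2)$, Proposition~\ref{so2} together with (\ref{n}) gives $c_{11}=1+n(C)>1$, a fact I will use repeatedly. Throughout I write $\widehat C$ for the $2\times 2$ matrix with entries $c_{ij}$, $i,j\in\{2,3\}$ --- the matrix ``bordered by the first row and column'' in the statement.

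First I would derive the two scalar identities. Comparing the $(1,1)$-entries of $C^{-1}=\operatorname{adj}(C)$ (valid since $\det C=1$) and of $C^{-1}=ICI$: the former equals the $(1,1)$-minor $\det\widehat C=c_{22}c_{33}-c_{23}c_{32}$, the latter equals $c_{11}$, so $c_{22}c_{33}-c_{23}c_{32}=c_{11}$, which is $>1$ by the previous paragraph. For the trace, $(CI)^2=E$ makes $CI$ a diagonalizable involution with $\det(CI)=\det C\cdot\det I=-1$, so its spectrum is $\{1,1,-1\}$ or $\{-1,-1,-1\}$; the latter forces $CI=-E$, i.e. $C=-I=\operatorname{diag}(1,-1,-1)$, whence $c_{11}=1$ and $C\in SO(2)$ by Proposition~\ref{so2}, contrary to hypothesis. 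Thus $\trace(CI)=1$, and since $CI$ has diagonal $(-c_{11},c_{22},c_{33})$ this reads $c_{22}+c_{33}=1+c_{11}$, which is $>2$ because $c_{11}>1$. Now $\trace\widehat C=1+c_{11}$ and $\det\widehat C=c_{11}$, so the characteristic polynomial of $\widehat C$ is $\lambda^2-(1+c_{11})\lambda+c_{11}=(\lambda-1)(\lambda-c_{11})$, i.e. its eigenvalues are $1$ and $c_{11}$.

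For the inequality and its equality case, symmetry gives $c_{32}=c_{23}$, so $c_{22}$ and $c_{33}$ are the (real) roots of $g(\lambda)=\lambda^2-(1+c_{11})\lambda+(c_{11}+c_{23}^2)$, the discriminant being automatically nonnegative (it equals $(c_{22}-c_{33})^2=(c_{11}-1)^2-4c_{23}^2$, cf. (\ref{nsq})). Since $g(1)=c_{23}^2\ge 0$ and the vertex $(1+c_{11})/2$ of this upward parabola lies strictly to the right of $1$, both roots are $\ge 1$, and a root equals $1$ exactly when $c_{23}=0$; in that case the other root is $(1+c_{11})-1=c_{11}$, so $\max(c_{22},c_{33})=c_{11}$. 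To identify the condition $c_{23}=c_{32}=0$, I would compute the $(2,3)$-entry of $CIC=I$: it equals $-c_{21}c_{13}+c_{23}(c_{22}+c_{33})$ and must vanish, so $c_{23}(1+c_{11})=c_{12}c_{13}$ after using symmetry; since $1+c_{11}>0$ this gives $c_{23}=0\iff c_{12}c_{13}=0$, while $c_{12}c_{13}=c_{21}c_{31}$ holds trivially for symmetric $C$.

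The only point that is not routine bookkeeping is the observation that a symmetric $C\in SO_0(2,1)$ forces $CI$ to be an involution: this is what simultaneously pins down $\trace(CI)$ and, through the cofactor identity, $\det\widehat C$. Once this is in place I expect no genuine obstacle --- the remaining assertions all drop out of elementary manipulations with $C^{-1}=ICI$ and $CIC=I$.
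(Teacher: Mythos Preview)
Your proof is correct. The paper proceeds throughout by comparing the cofactor formula $C^{-1}=\operatorname{adj}(C)$ with $C^{-1}=ICI$ entry by entry: the $(1,1)$-entry gives $c_{11}=c_{22}c_{33}-c_{23}^2$ (as you also do); summing the $(2,2)$- and $(3,3)$-entries $c_{22}=c_{11}c_{33}-c_{13}^2$, $c_{33}=c_{11}c_{22}-c_{12}^2$ together with $c_{12}^2+c_{13}^2=c_{11}^2-1$ yields $c_{22}+c_{33}=1+c_{11}$; and the three off-diagonal cofactor relations produce the chain $c_{12}^2(c_{33}-1)=c_{13}^2(c_{22}-1)=c_{23}^2(1+c_{11})=c_{12}c_{13}c_{23}$, from which both $\min(c_{22},c_{33})\ge 1$ and the equivalence $c_{23}=0\Leftrightarrow c_{12}c_{13}=0$ are read off. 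Your route differs at precisely these points: the trace identity via the spectrum of the involution $CI$ (with $\det(CI)=-1$ forcing eigenvalues $\{1,1,-1\}$) is slicker than the paper's cofactor sum; the inequality via the upward parabola $g(\lambda)$ with $g(1)=c_{23}^2\ge 0$ and vertex $(1+c_{11})/2>1$ is more transparent; and the last equivalence via the single $(2,3)$-entry of $CIC=I$ is shorter. In exchange, the paper's approach produces the intermediate identities above, which carry a little more information but are not used later. The eigenvalue computation for $\widehat C$ is the same in both proofs.
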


\begin{proof}
It follows from symmetry of the matrix $C$ and the equality (\ref{cps}) that
\begin{equation}
\label{inv}
C^{-1}=\left(\begin{array}{ccc}
c_{11} & -c_{12} & -c_{13} \\
-c_{12} & c_{22} & c_{23} \\
-c_{13} & c_{23} & c_{33}
\end{array}\right).
\end{equation}
Using (\ref{inv}) and the general inversion rule for matrices, we obtain the next equalities
\begin{equation}
\label{c11}
c_{11}=c_{22}c_{33}-c^2_{23};
\end{equation}
\begin{equation}
\label{c22}
c_{22}=c_{11}c_{33}-c^2_{13},\quad c_{33}=c_{11}c_{22}-c^2_{12};
\end{equation}
\begin{equation}
\label{c12}
c_{12}=c_{12}c_{33}-c_{13}c_{23},\quad -c_{13}=c_{12}c_{23}-c_{13}c_{22},\quad -c_{23}=c_{11}c_{23}-c_{12}c_{13}.
\end{equation}

The equality (\ref{c11}) gives us the first statement of Proposition \ref{symm}.

The summation of the equalities in (\ref{c22}) together with (\ref{inv}) and the identity $CC^{-1}=e$ gives us
$$c_{22}+c_{33}=c_{11}(c_{22}+c_{33})-(c^2_{12}+c^2_{13})=c_{11}(c_{22}+c_{33})-(c^2_{11}-1)$$
and we get at once the second statement of  Proposition \ref{symm}.

The equalities
$$c^2_{12}(c_{33}-1)=c^2_{13}(c_{22}-1)=c^2_{23}(1+c_{11})=c_{12}c_{13}c_{23}$$
follows from (\ref{c12}).
The last equality implies that $c_{12}c_{13}c_{23}\geq 0$ (and in this inequality we have the equality
if and only if $c_{23}=0$) and $c_{22}\geq 1,$ $c_{33}\geq 1,$ at the same time
we have an equality in at least one inequality  (we can't have two equalities, because
$c_{22}+c_{33}=1+c_{11}> 2$) if and only if $c_{23}=0.$ It is remain to prove the last statement.

Let $\lambda_1\leq \lambda_2$ be eigenvalues of the above-mentioned matrix. Then, in consequence of the first equality and the second equality of
Proposition \ref{symm}, we get
$$\lambda_1\cdot\lambda_2=c_{11},\quad \lambda_1+\lambda_2=1+c_{11}.$$
Obviously, the numbers $\lambda_1=1,$ $\lambda_2=c_{11}$ are solutions of this  combined equations.
\end{proof}

\begin{proposition}
\label{ysim}
If $C\in SO_0(2,1)-SO(2)$ and $c_{12}=c_{21},$ $c_{13}=c_{31}$, then $C$ is a symmetric matrix.
\end{proposition}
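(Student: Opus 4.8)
The plan is to use the two defining conditions of $SO_0(2,1)$ in a complementary way: the pseudoisometry relation (\ref{cps}) writes $C^{-1}$ \emph{linearly} in the entries of $C$, whereas $\det C=1$ (Remark \ref{rem1}) writes $C^{-1}$ as the adjugate (transposed cofactor) matrix, hence \emph{quadratically} in those entries; comparing the two expressions in a single pair of mirror positions will force $c_{23}=c_{32}$. First I would write (\ref{cps}) out coordinatewise. As $I$ is the diagonal matrix with diagonal $(-1,1,1)$, one has $(IC^TI)_{ij}=I_{ii}I_{jj}c_{ji}$, so
\begin{equation*}
C^{-1}=\left(\begin{array}{ccc}
c_{11} & -c_{21} & -c_{31} \\
-c_{12} & c_{22} & c_{32} \\
-c_{13} & c_{23} & c_{33}
\end{array}\right);
\end{equation*}
after the substitutions $c_{12}=c_{21}$, $c_{13}=c_{31}$ the first row and first column of $C^{-1}$ coincide with those in (\ref{inv}), and only the $(2,3)$ and $(3,2)$ entries remain to be pinned down.

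Next, since $\det C=1$, the inverse $C^{-1}$ equals the adjugate of $C$. Computing the two cofactors of $C$ in positions $(2,3)$ and $(3,2)$, equating them with the corresponding entries of the matrix displayed above, and using again $c_{21}=c_{12}$, $c_{31}=c_{13}$, I obtain
\begin{equation*}
c_{32}=-(c_{11}c_{23}-c_{12}c_{13}),\qquad c_{23}=-(c_{11}c_{32}-c_{12}c_{13}).
\end{equation*}
Subtracting these two identities gives $c_{32}-c_{23}=c_{11}(c_{32}-c_{23})$, that is $(1-c_{11})(c_{32}-c_{23})=0$. Since $C\in SO_0(2,1)-SO(2)$, Remark \ref{rem1} (equivalently, Proposition \ref{so2} together with (\ref{n}) and the inequality $c^2_{11}-1\geq 0$) gives $c_{11}>1$, so the first factor is nonzero and $c_{23}=c_{32}$; together with the hypotheses $c_{12}=c_{21}$, $c_{13}=c_{31}$ this yields $C=C^{T}$.

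The argument has no real obstacle: the only point demanding attention is the sign bookkeeping in passing from $C^{-1}=IC^{T}I$ and from $C^{-1}=\operatorname{adj}C$ to the two scalar identities above --- essentially the computation that produced (\ref{c11})--(\ref{c12}) in the proof of Proposition \ref{symm}, now carried out without assuming full symmetry. It is worth noting that only these two of the nine cofactor relations are actually needed.
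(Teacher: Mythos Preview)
Your proof is correct and follows essentially the same path as the paper's: both compute $C^{-1}$ via the pseudoisometry relation $C^{-1}=IC^{T}I$, compare the $(2,3)$ and $(3,2)$ entries with those coming from the adjugate (the paper's ``general inversion rule for matrices''), obtain the pair of identities $-c_{23}=c_{11}c_{32}-c_{12}c_{13}$, $-c_{32}=c_{11}c_{23}-c_{12}c_{13}$, and subtract to force $(1-c_{11})(c_{32}-c_{23})=0$. The only difference is presentational --- you display $IC^{T}I$ before imposing $c_{12}=c_{21}$, $c_{13}=c_{31}$, whereas the paper substitutes first.
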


\begin{proof}
It follows from the conditions on the matrix $C$ and the equality (\ref{cps}) that
\begin{equation}
\label{inv1}
C^{-1}=\left(\begin{array}{ccc}
c_{11} & -c_{12} & -c_{13} \\
-c_{12} & c_{22} & c_{32} \\
-c_{13} & c_{23} & c_{33}
\end{array}\right).
\end{equation}
Using (\ref{inv1}) and the general inversion rule for matrices, we obtain the equalities
$$-c_{23}=c_{11}c_{32}-c_{12}c_{13},\quad -c_{32}=c_{11}c_{23}-c_{12}c_{13}.$$
Subtracting the second equality from the first one, we get $c_{32}-c_{23}=c_{11}(c_{32}-c_{23})$.
Since $C\in SO_0(2,1)-SO(2)$, we have $c_{11}>1$. Therefore $c_{32}=c_{23}$.
\end{proof}

Hereafter, $\Sim$ denotes the set of all symmetric matrices in $SO_0(2,1)-\{1\otimes (-E_2)\}$.

\begin{proposition}
\label{simme}
$$\Sim^{-1}= \Sim;\quad k(\Sim)k^{-1}=\Sim,\quad k\in SO(2).$$
\end{proposition}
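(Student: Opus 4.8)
The plan is to deduce both equalities straight from the defining relation (\ref{cps}), $C^{-1}=IC^TI$, for $C\in SO_0(2,1)$, using two elementary remarks. First, $I=-1\otimes E_2$ is a symmetric (indeed orthogonal) matrix. Second, every $k\in SO(2)=1\otimes SO(2)$ is orthogonal, so that $k^{-1}=k^T$, and $k$ commutes with $1\otimes(-E_2)$, because in the $2\times2$ block $R(-E_2)=(-E_2)R$ for every $R\in SO(2)$ while the first row and column are trivial; moreover $1\otimes(-E_2)$ is an involution.

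For the first assertion I would argue as follows. Let $C\in\Sim$. Since $C^T=C$, relation (\ref{cps}) gives $C^{-1}=ICI$, and $(ICI)^T=I^TC^TI^T=ICI$, so $C^{-1}$ is symmetric; it lies in $SO_0(2,1)$ since the latter is a group. It remains to check that $C^{-1}\neq1\otimes(-E_2)$: as $1\otimes(-E_2)$ is its own inverse, $C^{-1}=1\otimes(-E_2)$ would force $C=1\otimes(-E_2)$, contradicting $C\in\Sim$. Hence $\Sim^{-1}\subseteq\Sim$, and applying this inclusion to $C^{-1}$ in place of $C$ yields the opposite inclusion, so $\Sim^{-1}=\Sim$.

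For the second assertion, fix $k\in SO(2)$ and $C\in\Sim$. Then $kCk^{-1}=kCk^T$ has transpose $kC^Tk^T=kCk^T$, hence is symmetric, and it belongs to $SO_0(2,1)$. If $kCk^{-1}=1\otimes(-E_2)$, then $C=k^{-1}(1\otimes(-E_2))k=1\otimes(-E_2)$ because $k$ commutes with $1\otimes(-E_2)$, again contradicting $C\in\Sim$. Thus $k\Sim k^{-1}\subseteq\Sim$, and replacing $k$ by $k^{-1}$ gives the reverse inclusion, so $k\Sim k^{-1}=\Sim$.

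I do not anticipate a genuine obstacle; the only point deserving care is that both operations must respect the removed matrix $1\otimes(-E_2)$, which is exactly why it matters that $1\otimes(-E_2)$ is an involution and is centralized by $SO(2)$.
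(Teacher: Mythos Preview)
Your proof is correct and follows essentially the same approach as the paper: both deduce symmetry of $C^{-1}$ from (\ref{cps}) and symmetry of $kCk^{-1}$ from $k^{-1}=k^T$. The paper's argument is terser and does not explicitly check that the excluded matrix $1\otimes(-E_2)$ is preserved under both operations, a point you correctly attend to.
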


\begin{proof}
The first equality is a consequence of (\ref{cps}). Let $s\in \Sim,$ $k\in SO(2).$ Then
$$ksk^{-1}=ksk^T,\quad (ksk^{-1})^T=ks^Tk^T=ksk^{-1}\quad\Rightarrow\,\, ksk^{-1}\in\Sim.$$
\end{proof}

\begin{proposition}
\label{repr}
Every element $C\in SO_0(2,1)$ has a unique representation  in the form $C=s_1k_1$ or $g=k_2s_2,$
where $k_i\in SO(2),$ $s_i\in \Sim;$ $i=1,2.$ Additionally, $k_1=k_2.$
\end{proposition}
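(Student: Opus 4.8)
The statement is a Lorentzian analogue of the polar (Cartan) decomposition, and the plan is to exhibit the two factors explicitly and then read off uniqueness from uniqueness of a positive square root. If a decomposition $C=s_1k_1$ exists, then $k_1$ must have the block form $1\otimes R$ with $R\in SO(2)$, and $s_1=Ck_1^{-1}$ shares with $C$ the entry $c_{11}$ and the spatial part of the first column; this dictates the construction. If $c_{11}=1$, then $C\in SO(2)$ by Proposition~\ref{so2}, and one takes $s_1=e\in\Sim$, $k_1=C$. If $c_{11}>1$, set $v=(c_{12},c_{13})^T$ and $w=(c_{21},c_{31})^T$; by~(\ref{system3}) one has $|v|^2=|w|^2=c_{11}^2-1>0$, so there is a unique $R\in SO(2)$ with $Rv=w$. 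Putting $k_1=1\otimes R$ and $s_1=Ck_1^{-1}\in SO_0(2,1)$, a block computation gives $(s_1)_{11}=c_{11}>1$ and first-row spatial part $v^TR^{-1}=(Rv)^T=w^T$, so $(s_1)_{12}=(s_1)_{21}$ and $(s_1)_{13}=(s_1)_{31}$; since moreover $s_1\notin SO(2)$, Proposition~\ref{ysim} forces $s_1$ to be symmetric, and $(s_1)_{11}\ne 1$ excludes $s_1=1\otimes(-E_2)$, so $s_1\in\Sim$ and $C=s_1k_1$.

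For uniqueness of $C=s_1k_1$, note that $k_1\in SO(2)\subset O(3)$ and $s_1=s_1^T$ give $CC^T=s_1^2$. The key auxiliary fact I would establish first is that \emph{every element of $\Sim$ is positive definite}: for $s\in\Sim$, either $s\in SO(2)$, in which case $s=e$ (the only symmetric element of $SO(2)$ besides the excluded $1\otimes(-E_2)$), or $s\in SO_0(2,1)-SO(2)$, and then Proposition~\ref{symm} gives $\trace s=s_{11}+(s_{22}+s_{33})=1+2s_{11}>3$; since $s$ is symmetric with $\det s=1$ and $IsI=s^{-1}$ by~(\ref{cps}), the spectrum of $s$ is invariant under $\lambda\mapsto 1/\lambda$, hence equals $\{\lambda,1/\lambda,1\}$, and $\lambda+1/\lambda+1=\trace s>3$ forces $\lambda>0$. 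Consequently $s_1$ is \emph{the} positive-definite square root of $CC^T$, which pins it down, and then $k_1=s_1^{-1}C$ is determined as well.

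Finally, applying the first part to $C^{-1}$ and using $\Sim^{-1}=\Sim$ (Proposition~\ref{simme}) gives $C^{-1}=s'k'$ with $s'\in\Sim$, $k'\in SO(2)$, hence $C=k_2s_2$ with $k_2=k'^{-1}\in SO(2)$, $s_2=s'^{-1}\in\Sim$, and uniqueness transfers. To see $k_1=k_2$, compute $CC^T=s_1^2$ and also $CC^T=(k_2s_2)(k_2s_2)^T=k_2s_2^2k_2^{-1}=(k_2s_2k_2^{-1})^2$; since $s_2$ is positive definite and $k_2$ orthogonal, $k_2s_2k_2^{-1}$ is symmetric positive definite (and lies in $\Sim$ by Proposition~\ref{simme}), so uniqueness of the positive square root yields $s_1=k_2s_2k_2^{-1}$, whence $C=k_2s_2=(k_2s_2k_2^{-1})k_2=s_1k_2$; comparing with $C=s_1k_1$ gives $k_1=k_2$.

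The main obstacle is the positive-definiteness of the elements of $\Sim$: without it one cannot invoke uniqueness of the symmetric square root, and indeed the symmetric matrix $1\otimes(-E_2)\in SO_0(2,1)$ is \emph{not} positive definite — which is exactly why it must be removed in the definition of $\Sim$. Everything else is a routine block computation together with direct appeals to Propositions~\ref{so2}, \ref{ysim}, \ref{simme}, and~\ref{symm}.
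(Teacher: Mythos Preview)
Your argument is correct. The existence half coincides with the paper's: both of you find the unique $R\in SO(2)$ sending $(c_{12},c_{13})$ to $(c_{21},c_{31})$ and then invoke Proposition~\ref{ysim} to conclude that $s_1=Ck_1^{-1}$ is symmetric (you are a bit more careful, treating the case $C\in SO(2)$ separately and checking $s_1\neq 1\otimes(-E_2)$, both of which the paper leaves implicit). Where you genuinely diverge is in the uniqueness and in the identification $k_1=k_2$. The paper never touches $CC^T$ or positive definiteness: uniqueness of $k_1$ is simply the uniqueness of the rotation carrying one nonzero planar vector to another of the same length, and the second decomposition together with $k_1=k_2$ is obtained by the conjugation trick $C=s_1k_1=k_1(k_1^{-1}s_1k_1)$, using Proposition~\ref{simme}, and then comparing any other $C=k_2s_2'$ with $C=(k_2s_2'k_2^{-1})k_2$. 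Your route instead proves the auxiliary fact that every $s\in\Sim$ is positive definite (via the spectral symmetry $\lambda\mapsto 1/\lambda$ forced by $IsI=s^{-1}$ and the trace bound from Proposition~\ref{symm}), and then identifies $s_1$ as the unique positive square root of $CC^T$. This is the classical polar--Cartan decomposition viewpoint; it is slightly longer but yields the extra structural information that $\Sim$ consists precisely of the positive-definite elements of $SO_0(2,1)$, and it makes transparent why $1\otimes(-E_2)$ must be excluded. The paper's approach, by contrast, is more elementary and entirely self-contained within the equations~(\ref{cps}) and~(\ref{phit}).
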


\begin{proof}
We shall look for $k_1$ in the form
$$k_1=1\otimes \left(\begin{array}{cc}
\cos \eta & -\sin\eta \\
\sin \eta &  \cos\eta
\end{array}\right).$$
Then one can easily see that the equality $C=s_1k_1,$ $s_1\in \Sim,$ is equivalent, on the ground of Proposition \ref{ysim}, to the
matrix equality
\begin{equation}
\label{phit}
\left(\begin{array}{c}
c_{21}\\
c_{31}
\end{array}\right)=\left(\begin{array}{cc}
\cos \eta & -\sin\eta \\
\sin \eta &  \cos\eta
\end{array}\right)\left(\begin{array}{c}
c_{12}\\
c_{13}
\end{array}\right).
\end{equation}
The vectors $(c_{21},c_{31}),$ $(c_{12},c_{13})$ have the same length $\sqrt{c_{11}^2-1}> 0.$ Therefore, there exists exactly one element
$k_1$ with the required property.

Then in consequence of Proposition \ref{simme},
$$C=s_1k_1=k_1(k^{-1}_1s_1k_1):=k_1s_2,\quad s_2\in \Sim.$$
If there exists another representation $C=k_2s_2',$ then $C=(k_2s_2'k_2^{-1})k_2=s_1k_1$ and $k_2=k_1$ by the uniqueness of the representation $C=s_1k_1;$ then $s_2'=s_2.$
\end{proof}

\begin{proposition}
\label{arb}
For any matrix $C\in SO_0(2,1)$, we have
\begin{equation}
\label{ident}
c_{22}c_{33}-c_{23}c_{32}=c_{11},\quad (c_{22}+c_{33})^2+(c_{32}-c_{23})^2=(1+c_{11})^2.
\end{equation}
\end{proposition}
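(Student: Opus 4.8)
The plan is to read both identities off the defining relation (\ref{cps}), namely $C^{-1}=IC^TI$ with $I=-1\otimes E_2$, together with the fact that $\det C=1$ on $SO_0(2,1)$. To get the first identity I would compute the $(1,1)$-entry of each side of (\ref{cps}): on the right it is $I_{11}\,(C^T)_{11}\,I_{11}=(-1)\,c_{11}\,(-1)=c_{11}$, while on the left the cofactor formula for the inverse, combined with $\det C=1$, gives $(C^{-1})_{11}=c_{22}c_{33}-c_{23}c_{32}$. Equating the two yields $c_{22}c_{33}-c_{23}c_{32}=c_{11}$.

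For the second identity I would first assemble three scalar relations among the entries of $C$. The condition (\ref{cps}) is equivalent to the pseudoisometry relation $C^TIC=I$ and also implies $CIC^T=I$ (multiply (\ref{cps}) on the left by $C$ and on the right by $I$, using $I^2=E$). The $(1,1)$-entry of $CIC^T=I$ reads $-c_{11}^2+c_{12}^2+c_{13}^2=-1$, i.e. $c_{12}^2+c_{13}^2=c_{11}^2-1$ (this also appears in (\ref{system3})); the $(2,2)$- and $(3,3)$-entries of $C^TIC=I$ read $c_{22}^2+c_{32}^2=1+c_{12}^2$ and $c_{23}^2+c_{33}^2=1+c_{13}^2$. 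Adding the last two and using the first to eliminate $c_{12}^2+c_{13}^2$ gives
$$c_{22}^2+c_{23}^2+c_{32}^2+c_{33}^2=1+c_{11}^2.$$

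It then only remains to complete the square: the left-hand side of the second of the identities (\ref{ident}) expands as $\bigl(c_{22}^2+c_{23}^2+c_{32}^2+c_{33}^2\bigr)+2\bigl(c_{22}c_{33}-c_{23}c_{32}\bigr)$, which by the two facts just proved equals $(1+c_{11}^2)+2c_{11}=(1+c_{11})^2$. There is no genuine obstacle here; the whole argument is just a matter of reading off a few matrix entries. The only points needing care are the signs introduced by $I$ (a $-1$ in the time slot, $+1$ in the two space slots) when expanding $CIC^T$ and $C^TIC$, and the observation that expanding $(c_{22}+c_{33})^2+(c_{32}-c_{23})^2$ produces precisely the cross term $2(c_{22}c_{33}-c_{23}c_{32})$, which is exactly what lets the first identity feed into the second. (Alternatively one could reduce to the symmetric case via Proposition~\ref{repr} and Proposition~\ref{symm} and check $SO(2)$-invariance of both identities, but the direct computation above is shorter.)
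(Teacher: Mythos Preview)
Your argument is correct. For the first identity you proceed exactly as the paper does: read off the $(1,1)$-entry of $C^{-1}=IC^TI$ using the cofactor formula and $\det C=1$.

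For the second identity, however, your route differs from the paper's. The paper observes that every $C\in SO_0(2,1)$ can be written in the geodesic form (\ref{geod1})--(\ref{geod3}), then uses the relation (\ref{nsq}), namely $(c_{22}-c_{33})^2+(c_{23}+c_{32})^2=(c_{11}-1)^2$, which was derived from those explicit geodesic formulas; subtracting this from the desired identity and applying the first equality in (\ref{ident}) gives the result. Your proof instead stays purely at the level of the defining matrix relations $C^TIC=I$ and $CIC^T=I$, extracting the three diagonal entries you need and combining them directly. The advantage of your approach is that it is self-contained and does not invoke the (analytically nontrivial) fact that every element of $SO_0(2,1)$ lies on some geodesic through $e$; the paper's approach, on the other hand, simultaneously yields the companion identity (\ref{nsq}), which is used again later in Proposition~\ref{k1}.
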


\begin{proof}
The first equality in (\ref{ident}) follows from (\ref{cps}) and the general inversion rule for matrices. Every matrix $C\in SO_0(2,1)$ can be presented in the form (\ref{geod1}), (\ref{geod2}), (\ref{geod3}). Then by the first equality in (\ref{ident}) and (\ref{nsq}), we have the second equality.
\end{proof}

In the next proposition, we compute the matrix  $k_1$ from Proposition \ref{repr}.

\begin{proposition}
\label{k1}
$$\cos\eta=\frac{c_{22}+c_{33}}{1+c_{11}},\quad \sin\eta=\frac{c_{32}-c_{23}}{1+c_{11}}.$$
\end{proposition}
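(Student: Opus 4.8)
The plan is to extract the formula for $\eta$ directly from the structure of the decomposition $C = s_1 k_1$ in Proposition~\ref{repr}, using the identities of Proposition~\ref{arb} together with Proposition~\ref{symm} applied to the symmetric factor $s_1$. First I would write $C = s_1 k_1$ with $k_1 = 1\otimes R(\eta)$, where $R(\eta)$ is the rotation matrix with entries $\cos\eta, -\sin\eta, \sin\eta, \cos\eta$, and $s_1\in\Sim$. Multiplying out the lower-right $2\times 2$ block, one gets the $2\times 2$ block of $C$ equal to the product of the corresponding block of $s_1$ with $R(\eta)$. Denoting the symmetric block of $s_1$ by $\begin{pmatrix} p & q \\ q & r\end{pmatrix}$, this gives the four scalar equations
$c_{22} = p\cos\eta + q\sin\eta$, $c_{23} = -p\sin\eta + q\cos\eta$, $c_{32} = q\cos\eta + r\sin\eta$, $c_{33} = -q\sin\eta + r\cos\eta$.

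From these, forming $c_{22}+c_{33} = (p+r)\cos\eta$ and $c_{32}-c_{23} = (p+r)\sin\eta$ isolates the dependence on $\eta$ cleanly, since the $q$ terms cancel. So it remains to identify $p+r$, i.e.\ the trace of the symmetric block of $s_1$. By Proposition~\ref{symm} applied to $s_1$ (noting $(s_1)_{11}=c_{11}$ since left multiplication by $k_1\in SO(2)$ does not change the first row or column, hence not the $(1,1)$ entry — this should be checked from the block structure of $k_1$), we have $p+r = (s_1)_{22}+(s_1)_{33} = 1 + (s_1)_{11} = 1 + c_{11}$, provided $C\notin SO(2)$. Dividing gives $\cos\eta = (c_{22}+c_{33})/(1+c_{11})$ and $\sin\eta = (c_{32}-c_{23})/(1+c_{11})$, consistent with the second identity of Proposition~\ref{arb} which confirms $\cos^2\eta+\sin^2\eta=1$.

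The remaining case is $C\in SO(2)$, where $c_{11}=1$; then $s_1 = e$ and $C = k_1$, so $\eta$ is just the rotation angle of $C$ and one reads off directly $\cos\eta = c_{22} = c_{33} = (c_{22}+c_{33})/2 = (c_{22}+c_{33})/(1+c_{11})$ and similarly $\sin\eta = c_{32} = -c_{23} = (c_{32}-c_{23})/(1+c_{11})$, so the formula persists. One should also note the formula determines $\eta$ uniquely as an angle, matching the uniqueness in Proposition~\ref{repr}.

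The main obstacle I expect is the bookkeeping in the first step: verifying that $(s_1)_{11} = c_{11}$ and that the relevant $2\times 2$ sub-blocks multiply in the claimed way, since $k_1 = 1\otimes R(\eta)$ acts as the identity on the first coordinate but the matrix $s_1$ has nonzero entries in its first row and column too. One must check that the $(i,j)$ entries for $i,j\in\{2,3\}$ of the product $s_1 k_1$ depend only on the lower-right block of $s_1$ — which is true precisely because $k_1$ fixes the first basis vector — so the block computation is legitimate. Everything else is the cancellation of the $q$ terms and an application of Proposition~\ref{symm}, both routine.
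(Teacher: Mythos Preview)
Your argument is correct and follows essentially the same line as the paper: write $C=s_1k_1$, look at the lower-right $2\times 2$ block, and use Proposition~\ref{symm} to identify the trace of the symmetric block of $s_1$ as $1+c_{11}$. The one difference is in the closing step: the paper computes the block of $s_1=Ck_1^{-1}$ in terms of the $c_{ij}$, obtains only the single trace relation $(c_{22}+c_{33})\cos\eta+(c_{32}-c_{23})\sin\eta=1+c_{11}$, and then invokes the identity $(c_{22}+c_{33})^2+(c_{32}-c_{23})^2=(1+c_{11})^2$ from Proposition~\ref{arb} together with the equality case of Cauchy--Schwarz. You instead write the block of $C=s_1k_1$ in terms of the entries $p,q,r$ of the symmetric block of $s_1$, and the symmetry (both off-diagonal entries equal $q$) makes the $q$'s cancel in \emph{two} linear combinations, giving both $\cos\eta$ and $\sin\eta$ directly. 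This is a modest but genuine simplification.

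One small wording slip: $k_1$ acts by \emph{right} multiplication in $C=s_1k_1$, so what is preserved is the first \emph{column} of $s_1$ (not the first row); this is all you need to get $(s_1)_{11}=c_{11}$, and your conclusion stands.
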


\begin{proof}
The $(2\times 2)$--matrix, bordered by the first row and the first column of the symmetric matrix
 $s_1$ from Proposition \ref{repr}, is equal to
$$
\left(\begin{array}{cc}
c_{22} & c_{23} \\
c_{32} &  c_{33}
\end{array}\right)\left(\begin{array}{cc}
\cos \eta & \sin\eta \\
-\sin \eta &  \cos\eta
\end{array}\right)=\left(\begin{array}{cc}
c_{22}\cos\eta - c_{23}\sin\eta  & c_{22}\sin\eta +c_{23}\cos\eta  \\
c_{32}\cos\eta - c_{33}\sin\eta  & c_{32}\sin\eta +c_{33}\cos\eta
\end{array}\right).$$
The first column of the matrix $s_1$ coincides with the first column of the matrix $C.$
By the second equality in Proposition \ref{symm}, applied to the matrix $s_1$, we get
\begin{equation}
\label{trace}
(c_{22}+c_{33})\cos\eta +(c_{32}-c_{23})\sin\eta = 1+c_{11}.
\end{equation}
Now Proposition \ref{k1} follows from (\ref{trace}), the second equality in (\ref{ident}), and the equality's case in the
Cauchy--Bunyakowsky--Schwarz inequality.
\end{proof}

\begin{proposition}
\begin{equation}
\label{init}
\Sigma:=\{g\in SO_0(2,1): g=kg^{-1}k^{-1},\quad k\in SO(2)\}=\Sim\cup (\Sim\cdot (1\otimes (-E_2))).
\end{equation}
\end{proposition}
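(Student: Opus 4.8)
The plan is to prove the two inclusions of the set equality $\Sigma = \Sim \cup (\Sim\cdot(1\otimes(-E_2)))$ separately, using Proposition~\ref{k1} to pin down the element $k$ that can witness the defining relation $g = kg^{-1}k^{-1}$.

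First I would treat the inclusion $\supseteq$. If $s\in\Sim$, then $s^{-1}\in\Sim$ by Proposition~\ref{simme}, and since $s$ is symmetric, $s = s^T = \dots$; more directly, for the trivial witness $k = e$ we need $s = s^{-1}$, which is false in general, so the correct observation is that $\Sim$ does sit inside $\Sigma$ for a nontrivial reason: for a symmetric $s$ one has $s^{-1} = IsI$ by~(\ref{cps}), and $I = 1\otimes(-E_2)$ conjugates $s$ as $IsI = IsI^{-1}$; but $I\notin SO(2)$. So instead I would argue: given $s\in\Sim$, choose $k\in SO(2)$ via Proposition~\ref{k1} applied with the roles so that $ks^{-1}k^{-1}=s$. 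Concretely, $s$ and $s^{-1}$ have the same first column (both symmetric, inverse formula~(\ref{inv})), hence the same $n$-value and the same $(2\times 2)$ block data up to the $k$-rotation from Proposition~\ref{repr}; since $s^{-1}$ is again symmetric, the rotation $k_1$ relating them must rotate the symmetric part of $s^{-1}$ back onto that of $s$, and the explicit angle is given by Proposition~\ref{k1}. The same computation applied to $s\cdot I$ (still an element of $SO_0(2,1)$, with $(s I)^{-1} = I s^{-1} I\cdot I = \dots$) shows $sI\in\Sigma$ as well. Thus both pieces of the right-hand side lie in $\Sigma$.

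For the inclusion $\subseteq$, suppose $g\in\Sigma$, so $g = kg^{-1}k^{-1}$ for some $k\in SO(2)$. Write $g = s_1k_1$ by Proposition~\ref{repr}, with $s_1\in\Sim$, $k_1\in SO(2)$. Then $g^{-1} = k_1^{-1}s_1^{-1} = k_1^{-1}s_1^{-1}k_1\cdot k_1^{-1}$; applying Proposition~\ref{repr} again (and Proposition~\ref{simme} to see $k_1^{-1}s_1^{-1}k_1\in\Sim$) gives the polar-type decomposition of $g^{-1}$. Now $kg^{-1}k^{-1}$ has polar decomposition $(ks'k^{-1})(kk_1^{-1}k^{-1}) = (ks'k^{-1})k_1^{-1}$ since $SO(2)$ is abelian, where $s' = k_1^{-1}s_1^{-1}k_1\in\Sim$. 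By uniqueness in Proposition~\ref{repr}, equating this with $g = s_1k_1$ forces $k_1^{-1} = k_1$, i.e. $k_1^2 = e$, so $k_1 = e$ or $k_1 = 1\otimes(-E_2) = I$; and in either case $ks'k^{-1} = s_1$. If $k_1 = e$ then $g = s_1\in\Sim$; if $k_1 = I$ then $g = s_1 I\in \Sim\cdot(1\otimes(-E_2))$. This gives $\subseteq$.

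The main obstacle I expect is the bookkeeping around which rotation $k$ realizes the relation and the edge case $g = 1\otimes(-E_2)$ itself (excluded from $\Sim$ by definition, which is exactly why the factor $1\otimes(-E_2)$ appears on the right): one must check that when $k_1 = I$ the symmetric factor $s_1$ genuinely avoids $1\otimes(-E_2)$, equivalently $g\neq I^2 = e$-type degeneracies do not cause $s_1 = I$. Concretely $s_1 = g k_1^{-1}$, and $s_1 = 1\otimes(-E_2)$ would force $g = 1\otimes(-E_2)\cdot k_1 = I k_1$, i.e.\ $g\in SO(2)$, whence $n(g) = 0$ by Proposition~\ref{so2} and $g$ would be symmetric only if $g = \pm(1\otimes E_2)$, both of which are handled directly ($g = e$ gives $g\in\Sim$ trivially via $k=e$; $g = 1\otimes(-E_2)$ is excluded). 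A secondary technical point is verifying the $\supseteq$ direction's explicit angle computation genuinely closes — here I would lean on Proposition~\ref{k1} with $C$ replaced by the symmetric matrix $s^{-1}$ (resp.\ $(sI)^{-1}$), noting its $c_{22}+c_{33}$ and $c_{32}-c_{23}$ entries, and check the resulting $k$ indeed conjugates $g^{-1}$ to $g$ by comparing first columns and the $2\times2$ blocks.
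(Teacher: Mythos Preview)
Your argument for the inclusion $\Sigma\subseteq\Sim\cup\Sim\cdot(1\otimes(-E_2))$ is correct and is essentially the paper's proof: write $g=s_1k_1$ via Proposition~\ref{repr}, rewrite $kg^{-1}k^{-1}$ as $k_1^{-1}s_2$ (the paper) or as $s_2'\,k_1^{-1}$ (your version), and use the uniqueness in Proposition~\ref{repr} together with $k_1=k_2$ to force $k_1=k_1^{-1}$, hence $k_1\in\{e,\,1\otimes(-E_2)\}$. Your edge-case worry that $s_1$ might equal $1\otimes(-E_2)$ is already handled: Proposition~\ref{repr} produces $s_1\in\Sim$, and $\Sim$ excludes $1\otimes(-E_2)$ by definition.

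For the reverse inclusion you are making things harder than necessary, and the trouble stems from a small misreading: the matrix $I$ in~(\ref{cps}) is $-1\otimes E_2=\operatorname{diag}(-1,1,1)$, \emph{not} $1\otimes(-E_2)$. Setting $J:=1\otimes(-E_2)\in SO(2)$ (so $J^{-1}=J$), one has $I=-J$; hence for symmetric $s$ formula~(\ref{cps}) gives
\[
s^{-1}=Is^TI=IsI=(-J)s(-J)=JsJ=Js J^{-1},
\]
so $s=Js^{-1}J^{-1}\in\Sigma$ with witness $k=J$. Likewise $(sJ)^{-1}=Js^{-1}=J(JsJ)=sJ$, so every element of $\Sim\cdot J$ is an involution and lies in $\Sigma$ with witness $k=e$. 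This one-line computation is what the paper means by ``clearly, the opposite inclusion is true''; Proposition~\ref{k1} and the angle bookkeeping you outline are not needed.
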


\begin{proof}
Let $g=kg^{-1}k^{-1}$, $k\in SO(2)$ and $g=s_1k_1$ be unique representation from Proposition \ref{repr}.
Then in consequence of Propositions \ref{simme} и \ref{repr},
$$s_1k_1=kk_1^{-1}s_1^{-1}k^{-1}=k_1^{-1}(ks_1^{-1}k^{-1}):=k_1^{-1}s_2,\quad k_1^{-1}=k_1.$$
Then $k_1=e$ or $k_1=1\otimes (-E_2)$ and $g\in \Sim\cup (\Sim\cdot (1\otimes (-E_2))).$ Thus
the left set from formula (\ref{init}) is a part of the right set; clearly, the opposite inclusion is true.
\end{proof}

\begin{proposition}
\label{op}
The matrix $C\in SO_0(2,1)-SO(2)$ is symmetric if and only if
$C = \gamma(0,\phi; t),$ where
\begin{equation}
\label{data1}
\ch t = c_{11}>1,\quad \cos\phi=\frac{c_{12}}{\sqrt{c^2_{11}-1}},\quad
\sin \phi=\frac{c_{13}}{\sqrt{c^2_{11}-1}}.
\end{equation}
\end{proposition}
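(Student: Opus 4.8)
The plan is to show the two implications separately, relying on the explicit geodesic formulas from Theorem~\ref{theor2} together with the structural facts about symmetric matrices already established.

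\textbf{The ``if'' direction.} Suppose $C=\gamma(0,\phi;t)$ with $c_{11}=\ch t>1$. I would simply substitute $\beta=0$ into the formulas (\ref{mn2}), (\ref{geod1}), (\ref{geod2}), (\ref{geod3}): then $m=\Sh t$, $n=\ch t-1$, and the matrix entries simplify dramatically because every term carrying a factor $\beta$ drops out and $\beta t=0$, so all occurrences of $\cos\beta t$, $\sin\beta t$ become $1$, $0$. One then reads off $c_{21}=m\cos\phi=c_{12}$, $c_{31}=m\sin\phi=c_{13}$, and, since $\beta t-2\phi=-2\phi$, the system (\ref{sistem2}) combined with (\ref{sistem1}) shows $c_{23}=c_{32}$ (both equal $-\tfrac{n}{2}\sin 2\phi$ from (\ref{sistem2}), while (\ref{sistem1}) with $\beta=0$ forces nothing extra). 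Thus all three off-diagonal symmetry conditions hold, so $C$ is symmetric; and since $c_{11}>1$, $C\notin SO(2)$ by Proposition~\ref{so2}, and $C\neq 1\otimes(-E_2)$. Equivalently, one may invoke Proposition~\ref{ysim} directly once $c_{12}=c_{21}$ and $c_{13}=c_{31}$ are checked, which shortens the bookkeeping.

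\textbf{The ``only if'' direction.} Suppose $C\in SO_0(2,1)-SO(2)$ is symmetric. Since $SO_0(2,1)$ is geodesically complete as a sub-Riemannian manifold (every point is joined to $e$ by a geodesic through $e$), write $C=\gamma(\beta,\phi;t)$ for some parameters. By Remark~\ref{rem2} and the discussion after Proposition~\ref{T} we may normalize, say, $t>0$. The goal is to force $\beta=0$. Using symmetry, $c_{21}=c_{12}$ and $c_{31}=c_{13}$; comparing the first entries of $\gamma_1(t)$ with the first rows in $\gamma_2(t),\gamma_3(t)$ (i.e. the pairs $(c_{21},c_{31})$ versus $(c_{12},c_{13})$), the vector $(c_{12},c_{13})$ is obtained from $(m,-\beta n)$ rotated by $\phi$, while $(c_{21},c_{31})$ is $(m,\beta n)$ rotated by $(\beta t-\phi)$ — more precisely one extracts this from (\ref{geod1})–(\ref{geod3}). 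Equating lengths gives nothing new (both equal $m^2+\beta^2n^2$ by (\ref{system3})), but equating the vectors themselves yields a relation that, since $c_{11}>1$ forces $n=c_{11}-1>0$ and hence $(m,\beta n)\neq 0$, can only hold when $\beta n=0$, i.e. $\beta=0$. Once $\beta=0$, formulas (\ref{mn2}) give $\ch t=1+n=c_{11}$, and (\ref{geod1}) gives $c_{12}=m\cos\phi$, $c_{13}=m\sin\phi$ with $m=\Sh t=\sqrt{c_{11}^2-1}$, which is exactly (\ref{data1}).

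\textbf{Main obstacle.} The delicate point is the extraction of the constraint $\beta n=0$ from the symmetry conditions: a priori the system $c_{12}=c_{21}$, $c_{13}=c_{31}$ is two scalar equations in the three unknowns $\beta,\phi,t$, and one must argue that, given $c_{11}>1$ (hence $n>0$ and $m^2+\beta^2n^2>0$), these force $\beta=0$ rather than merely constraining $\phi$ in terms of $\beta,t$. I expect the cleanest route is to write the two column-pairs as rotations of the fixed vectors $(m,\pm\beta n)$ by the angles $\phi$ and $\beta t-\phi$ respectively, so that equality of the two pairs means the rotation by $2\phi-\beta t$ sends $(m,\beta n)$ to $(m,-\beta n)$; this is a reflection-type condition whose only solution compatible with all entries (in particular with the requirement that $C$ also have $c_{23}=c_{32}$, via (\ref{sistem1})–(\ref{sistem2})) is $\beta n=0$. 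Handling this case analysis carefully — ruling out the spurious possibility that the rotation angle is chosen to flip the sign while $\beta n\neq 0$, which would contradict consistency of the remaining entries — is where the real work lies; everything else is substitution into the formulas of Theorem~\ref{theor2}.
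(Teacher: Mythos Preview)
Your ``if'' direction is fine and matches the paper: substituting $\beta=0$ into Theorem~\ref{theor2} and invoking Proposition~\ref{ysim} is exactly what the paper does (the paper just says ``the sufficiency follows from (\ref{geod1}), (\ref{geod2}), (\ref{geod3})'').

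The ``only if'' direction, however, has a genuine gap, and in fact the approach cannot work as stated. You start from an \emph{arbitrary} geodesic representation $C=\gamma(\beta,\phi;t)$ and try to force $\beta=0$. But the proposition does not assert that every geodesic from $e$ to a symmetric $C$ has $\beta=0$; it only asserts that the \emph{specific} choice $\beta=0$, with $t,\phi$ given by (\ref{data1}), reproduces $C$. Your stronger claim is false. Working out the two equations $c_{12}=c_{21}$, $c_{13}=c_{31}$ from (\ref{geod1})--(\ref{geod3}) one finds (after the rotation trick you sketch) that they are jointly equivalent to
\[
m\sin\!\left(\tfrac{\beta t}{2}\right)=\beta n\cos\!\left(\tfrac{\beta t}{2}\right),
\]
and by Proposition~\ref{ysim} this already forces $c_{23}=c_{32}$. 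For $|\beta|=1$ (so $m=t$, $n=t^2/2$) this reduces to $\tan(t/2)=t/2$, which has nonzero solutions; hence there \emph{are} symmetric matrices $C\in SO_0(2,1)-SO(2)$ lying on geodesics with $\beta\neq 0$. So the ``spurious possibility'' you hoped to rule out with the remaining entries is not spurious at all, and the obstacle you flagged is fatal to this route.

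The paper's argument for necessity is both simpler and different in spirit: it does not pick an arbitrary geodesic and constrain its parameters. Instead, given symmetric $C$, it \emph{defines} $t>0$ and $\phi$ by (\ref{data1}) and then proves directly that $\gamma(0,\phi;t)=C$. The point is that both matrices are symmetric elements of $SO_0(2,1)$ sharing the same first row (and first column), and a symmetric matrix in $SO_0(2,1)$ is completely determined by its first row: the three $2$--vectors $(c_{21},c_{31})$, $(c_{22},c_{23})$, $(c_{23},c_{33})$ have all pairwise scalar products fixed by the first row (via the pseudo-orthogonality relations), and together with Proposition~\ref{symm} (in particular $c_{22}+c_{33}=1+c_{11}$ and the eigenvalue statement) this pins down the lower $2\times 2$ block. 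That rigidity is the key idea you are missing.
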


\begin{proof}
The sufficiency follows from (\ref{geod1}), (\ref{geod2}), (\ref{geod3}). Let us prove the necessity.
Let $C\in SO_0(2,1)-SO(2)$ be a symmetric matrix. It is clear that the conditions $t>0$
and (\ref{data1}) uniquely define $\phi$ and $\gamma(0,\phi; t).$ Let us show that
$C= \gamma(0,\phi; t).$ The elements in the first column and the first row of these two matrices coincide.
All two-dimensional vectors $(c_{21},c_{31}),$ $(c_{22},c_{32}=c_{23}),$ $(c_{23},c_{33})$ aren't null vector, their scalar products (and scalar squares) and also the first vector are defined by elements of the first row and the first column of the matrix  $C,$ because $C\in P(2,1),$ moreover, the second coordinate of the second vector is equal to the first coordinate of the third vector. One can easily see that this fact and the statements of Proposition \ref{symm} guarantee that the second and the third vectors are uniquely defined by elements of the first row and the first column of the matrix $C.$ Since the same statements are valid for the matrix $\gamma(0,\phi;t),$ then these matrices coincide.
\end{proof}

It follows directly from Propositions \ref{op} and \ref{beta0} the next

\begin{proposition}
\label{Sim}
The set $\Sim$ of all symmetric matrices from $SO_0(2,1)-SO(2)$ is a union of all geodesics--shortest arcs
$\gamma(t)=\gamma(0,\phi;t)$, $0\leq t\leq t_1,$ joining the unit $e$ with elements from $\Sim$.
\end{proposition}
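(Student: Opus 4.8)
The plan is to derive the statement as an immediate combination of Proposition~\ref{op}, which identifies the symmetric matrices in $SO_0(2,1)-SO(2)$ with the points $\gamma(0,\phi;t)$, $t>0$, and Proposition~\ref{beta0}, which guarantees that every segment of such a geodesic is a shortest arc.

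First I would spell out what Proposition~\ref{op} gives in both directions. Its sufficiency part, which rests on the explicit formulas (\ref{geod1})--(\ref{geod3}) specialized to $\beta=0$, says that for every $\phi\in\mathbb{R}$ and every $t>0$ the matrix $\gamma(0,\phi;t)$ is symmetric and lies in $SO_0(2,1)-SO(2)$. Its necessity part says conversely that every symmetric $C\in SO_0(2,1)-SO(2)$ equals $\gamma(0,\phi;t)$ with $t=\Arch c_{11}>0$ and $\phi$ determined by (\ref{data1}). Together with $e=\gamma(0,\phi;0)$, this shows that $\Sim$ is precisely the image of the map $(\phi,t)\mapsto\gamma(0,\phi;t)$, $\phi\in\mathbb{R}$, $t\ge 0$.

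Next I would invoke Proposition~\ref{beta0}: since $\beta=0$, each segment $\gamma(0,\phi;t)$, $0\le t\le t_1$, is a shortest arc, and it joins $e=\gamma(0,\phi;0)$ to $\gamma(0,\phi;t_1)$, which by the previous step is a point of $\Sim$ when $t_1>0$. Conversely, given an arbitrary $C\in\Sim$ with $C\ne e$, write $C=\gamma(0,\phi;t_1)$ with $t_1>0$ as in Proposition~\ref{op}; then the segment $\gamma(0,\phi;t)$, $0\le t\le t_1$, is a shortest arc from $e$ to $C$, and every intermediate point $\gamma(0,\phi;t)$ with $t>0$ again lies in $\Sim$ by the sufficiency part of Proposition~\ref{op}. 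Hence each of the geodesic-shortest arcs described in the statement is contained in $\Sim$, and every element of $\Sim$ lies on one of them, which is exactly the claimed equality.

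The argument is essentially formal once Propositions~\ref{op} and~\ref{beta0} are available, so I do not expect a genuine obstacle here; the only point that deserves to be stated carefully is that the arcs $\gamma(0,\phi;\cdot)$ stay inside $\Sim$ for all parameter values, i.e.\ that $\Sim$ is really swept out by these arcs and not merely touched at their endpoints, and this is secured by applying Proposition~\ref{op} at each $t>0$ separately.
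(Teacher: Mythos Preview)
Your proposal is correct and follows exactly the route the paper indicates: the paper does not give a separate argument but simply states that Proposition~\ref{Sim} ``follows directly from Propositions~\ref{op} and~\ref{beta0}''. Your write-up merely makes this explicit by using both directions of Proposition~\ref{op} to identify $\Sim$ with the image of $(\phi,t)\mapsto\gamma(0,\phi;t)$ and then invoking Proposition~\ref{beta0} for the shortest-arc property, which is precisely what the paper intends.
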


\section{Cut loci and conjugate sets in $(SO_0(2,1),d)$}

Unlike the Riemannian manifolds, the exponential map $\Exp_x,$ $x\in M,$
for a sub-Riemannian manifold $(M,d)$ with no abnormal geodesic (as in the case of $(SO_0(2,1),d)$) are defined not on $TM$ and $T_xM$ but on $D(x)\times \Ann(D(x))$, where $D$ is the \textit{distribution} on $M$ involved in the definition of $d,$ and
$$\Ann(D(x))=\{\psi\in T^{\ast}_xM: \langle\psi,D(x)\rangle=0\},$$
see \cite{VG}.
Otherwise, the cut loci and conjugate sets for such sub-Riemannian manifolds are defined in the same way as for Riemannian ones \cite{GKM}.

\begin{definition}
\label{cutl}
Cut locus $C(x)$ (respectively, (the first) conjugate set $S(x)$ ($S_1(x)$)) for a point $x$ in a sub-Riemannian manifold $M$ (without abnormal geodesics) is the set of ends of all shortest arcs starting at the point $x$ and noncontinuable beyond its ends
(respectively, the image of the set of (the first) critical points (along geodesics with the origin in $x$) of the map $\Exp_x$
with respect to $\Exp_x$).
\end{definition}

\begin{proposition}
\label{se}
$$S(e)=(S_1(e)=SO(2)-\{e\}) \quad\cup $$
$$\left\{\gamma(\beta,\phi;t)\mid \tg\left(\frac{t\sqrt{\beta^2-1}}{2}\right)=\frac{t\sqrt{\beta^2-1}}{2},\,\,\beta^2>1,\,\, t\neq 0\right\};$$
$$C\cap S(e)=C\cap S_1(e)=SO(2)-\{e\}.$$
\end{proposition}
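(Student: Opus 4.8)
The plan is to analyze the conjugate set of the exponential map $\Exp_e$ directly from the explicit geodesic formulas of Theorem~\ref{theor2}, splitting into the three cases $|\beta|=1$, $|\beta|<1$, $|\beta|>1$. First I would observe that the map $\Exp_e$ sends the pair $(\beta,\phi;t)$ to $\gamma(\beta,\phi;t)$, and that the $\phi$-direction and $t$-direction generate (together with a third ``momentum'' coordinate) the differential; a critical point is a $t_0$ where the Jacobian of $(\beta,\phi,t)\mapsto\gamma(\beta,\phi;t)$ drops rank. Because the metric is left-invariant and $SO(2)$-conjugation-invariant, by Corollary~\ref{go} the answer depends only on $|\beta|$ and the conjugate value $t$, so it suffices to compute $\partial\gamma/\partial\phi$, $\partial\gamma/\partial\beta$, $\partial\gamma/\partial t$ at a candidate point and detect linear dependence. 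I expect the quantities $m$ and $n$ from \eqref{mn1}--\eqref{mn3} and their derivatives in $t$ and $\beta$ to be the natural variables: conjugacy will reduce to the vanishing of a single determinant expressible through $m,n$ and $\beta$.

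Second, for the branch $|\beta|>1$ I would substitute $m=\sin(t\sqrt{\beta^2-1})/\sqrt{\beta^2-1}$ and $n=(1-\cos(t\sqrt{\beta^2-1}))/(\beta^2-1)$ into \eqref{geod1}--\eqref{geod3} and compute the rank-deficiency condition; the trigonometric identities should collapse it to $\tan(t\sqrt{\beta^2-1}/2)=t\sqrt{\beta^2-1}/2$, which is exactly the stated equation. (This is the classical ``$\tan u=u$'' conjugate-point equation familiar from the Heisenberg-type and $SL(2)$-type examples in \cite{BerZub1}, \cite{BerZub2}.) For $|\beta|<1$ the analogous computation yields a hyperbolic analogue $\tanh u = u$, which has no nonzero solution, so no conjugate points arise off $SO(2)$ in that range; for $|\beta|=1$ a direct polynomial computation in $t$ likewise gives no nonzero critical $t$. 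This will establish that the second set in the displayed formula for $S(e)$ is exactly the conjugate locus coming from geodesics with $\beta\ne 0$, $|\beta|>1$.

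Third, I must account for the subgroup piece $SO(2)-\{e\}$ and show it equals both $S_1(e)\cap C$ and, indeed, lies in $S(e)$. Here I would use Proposition~\ref{T}: every geodesic with $\beta\ne 0$ ceases to be a shortest arc at a finite time $T(|\beta|)$, and by letting $|\beta|\to\infty$ (fast rotation) the projected curve in $L^2$ via \eqref{subm} shrinks to a small circle; the endpoints of the corresponding non-continuable shortest arcs accumulate on, and in fact fill out, the fiber $SO(2)$. More precisely, for each $k\in SO(2)-\{e\}$ one exhibits a family of geodesics whose $T(|\beta|)$-endpoints converge to $k$, and a limiting argument (using that $\Exp_e$ is a local diffeomorphism away from conjugate points, plus the closedness of the cut locus) shows $k$ is itself a cut point that is simultaneously the first conjugate point along the limiting geodesic — giving $SO(2)-\{e\}\subseteq S_1(e)\cap C(e)$. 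Combined with Proposition~\ref{beta0} (the $\beta=0$ geodesics are shortest hence have no interior conjugate point) and the $|\beta|>1$ analysis, one gets $C\cap S(e)=C\cap S_1(e)=SO(2)-\{e\}$, since the $\tan u=u$ conjugate points with $|\beta|>1$ occur \emph{after} the cut time $T(|\beta|)$ and so do not lie on $C(e)$.

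The main obstacle will be the third step: rigorously identifying the fiber $SO(2)-\{e\}$ as genuinely part of the conjugate set $S_1(e)$ (not merely of the cut locus), because this requires a delicate limiting/degeneration argument for $\Exp_e$ as $|\beta|\to\infty$ where the three coordinates $(\beta,\phi,t)$ become unbalanced and the naive Jacobian computation is singular. I would handle it by reparametrizing — e.g. working with $\beta t$ fixed and $\beta\to\infty$ — so that the limit geodesic is a rotation 1-parameter subgroup and the conjugate condition passes to the limit continuously; the geometric submetry picture in \eqref{subm} and item 3) of the Introduction (area $=\rho$-distance, fibers of length $2\pi$) provides the quantitative control needed to pin down exactly which fiber elements appear and that they are hit at the first conjugate time.
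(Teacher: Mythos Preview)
Your approach is genuinely different from the paper's and, in its third step, has a real gap.

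\textbf{What the paper does.} The paper does not compute anything directly for $SO_0(2,1)$. It observes that the canonical epimorphism $L:SL(2)\to SL(2)/\{\pm e\}\cong SO_0(2,1)$ is a local isometry between $(SL(2),\delta)$ and $(SO_0(2,1),d)$, sending geodesics with parameter $\beta$ to geodesics with the same $\beta$. Hence the critical values of $\Exp_e$ coincide for the two spaces, and the identical statement (Proposition~11 of \cite{BerZub}) for $SL(2)$ transfers verbatim. That is the entire argument.

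\textbf{Where your plan goes wrong.} Your first two steps (direct Jacobian computation, reduction to $\tan u=u$ for $|\beta|>1$, no solutions for $|\beta|\le 1$) are a legitimate alternative route. The problem is step~3. Showing $SO(2)-\{e\}\subset S_1(e)$ does \emph{not} require any limit $|\beta|\to\infty$, and your proposed limiting argument does not prove conjugacy: accumulation of cut points, closedness of the cut locus, and ``limiting geodesics'' do not produce a rank drop of $d\Exp_e$ at an honest parameter value. The correct direct mechanism is already visible in the formulas you cite. For any fixed $|\beta|>1$, at $t_0=2\pi/\sqrt{\beta^2-1}$ one has $m=n=0$ by \eqref{mn3}; then \eqref{geod1}--\eqref{geod3} give $\gamma(\beta,\phi;t_0)=\exp(\beta t_0\, c)\in SO(2)$, \emph{independent of $\phi$}. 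Thus $\partial_\phi\gamma(\beta,\phi;t_0)=0$ and $t_0$ is a conjugate time. It is the \emph{first} conjugate time because the smallest positive root of $\tan u=u$ lies in $(\pi,3\pi/2)$, i.e.\ occurs at a value $t>t_0$. Since $\beta t_0=2\pi\beta/\sqrt{\beta^2-1}$ sweeps $(2\pi,\infty)$ as $\beta$ ranges over $(1,\infty)$, every element of $SO(2)-\{e\}$ is hit, giving $S_1(e)=SO(2)-\{e\}$. With this correction your direct computation would go through; but note that the paper avoids all of this by the one-line covering argument.
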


\begin{proof}
The Lie group $SL(2)/\{\pm e\}$ is isomorphic to the Lie group $SO_0(2,1)$ (see, for example, \cite{Ber}). By Theorem 1 from \cite{Ber}, there exists
a locally isomorphic epimorphism of the Lie groups
$$L:SL(2) \rightarrow SL(2)/\{\pm e\}\cong SO_0(2,1)$$
such that, in terms of this paper and paper \cite{BerZub},
$$dL(e)(p_1)=a,\quad dL(e)(p_2)=b,\quad dL(e)(k)=c.$$
At the same time, $(p_1,p_2)$ is an orthonormal basis of the vector subspace of the Lie algebra
$\mathfrak{sl}(2)$ of the Lie group $SL(2),$ defining the left-invariant sub-Riemannian metric
$\delta$ on the Lie group $SL(2).$ This implies that the map
$L:(SL(2),\delta)\rightarrow (SO_0(2,1),d)$ is
a submetry \cite{BG} and  local isometry. Consequently, $L$ maps the geodesics of the space
$(SL(2),\delta)$ (with a parameter $\beta$) to the geodesics of the space $(SO_0(2,1),d)$ (with the same parameter).
Therefore, the  critical values of the map $\Exp_e$ for $(SL(2),\delta)$ and $(SO_0(2,1),d)$ coincide. Besides,
$L(SO(2)\subset SL(2))=SO(2)\subset SO_0(2,1).$ The proposition \ref{se} with the same statement was
proved as Proposition 11 in the paper \cite{BerZub} for the space $(SL(2),\delta).$ As a result, it holds for the space
$(SO_0(2,1),d).$
\end{proof}

The statements from the following proposition were actually proved in \cite{Ber}.

\begin{proposition}
\label{kk1}
A matrix $C\in C(e)$ if and only if $C\in S_1(e)=SO(2)-\{e\}$ or there exist
$\phi_1$, $\phi_2\in\mathbb{R}$, $0< \beta < \frac{3}{\sqrt{5}}$ such that
\begin{equation}
\label{equat}
C=\gamma(\beta,\phi_1;T)=\gamma(\beta,\phi_2;-T),
\end{equation}
where $\gamma(t)$ is defined by (\ref{geod1}), (\ref{geod2}), (\ref{geod3}), and
\begin{equation}
\label{zT}
T=\min\{t>0\mid \gamma(\beta,\phi_1;t)=\gamma(\beta,\phi_2;-t)\}.
\end{equation}
\end{proposition}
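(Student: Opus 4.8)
\textbf{Proof proposal for Proposition \ref{kk1}.}

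The plan is to reduce the description of $C(e)$ to a symmetry analysis of the endpoints of the noncontinuable shortest arcs furnished by Proposition \ref{T}, exactly along the lines of \cite{Ber}. First I would recall that by Propositions \ref{beta0} and \ref{T}, every noncontinuable shortest arc from $e$ is either a segment $\gamma(0,\phi;t)$, $0\le t\le t_1$ (whose endpoint, by Proposition \ref{op}, lies in $\Sim$ and is reached by a \emph{unique} geodesic, hence is not a cut point), or a segment $\gamma(\beta,\phi;t)$, $0\le t\le T(|\beta|)$, with $\beta\neq 0$. Therefore, apart from the already-identified conjugate locus $SO(2)-\{e\}$ coming from Proposition \ref{se} (which consists of first conjugate points along the $\beta$-geodesics and, by that proposition, indeed lies in $C(e)$), a point $C\notin SO(2)$ lies in $C(e)$ precisely when it is the endpoint $\gamma(\beta,\phi;T)$ of such a noncontinuable shortest arc; I must then show this forces a \emph{second} distinct shortest arc to $C$, and that the two can be taken in the symmetric form $\gamma(\beta,\phi_1;T)=\gamma(\beta,\phi_2;-T)$ with $T$ minimal as in (\ref{zT}), together with the range $0<\beta<3/\sqrt5$.

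The key step is the symmetry argument producing the second shortest arc. Using Remark \ref{rem2}, the geodesic $\gamma(\beta,\phi;t)$ run backwards, $t\mapsto\gamma(\beta,\phi;-t)$, is again a geodesic of the family but with parameters $(-\beta,\phi\pm\pi)$, equivalently $(\beta,\phi')$ after absorbing the sign of $\beta$; since $T=T(|\beta|)$ does not depend on $\phi$, the backward segment $\gamma(\beta,\phi;-t)$, $0\le t\le T$, is also a noncontinuable shortest arc, from $e$ to $\gamma(\beta,\phi;-T)$. The matrices $\Sim$ being fixed (as a set) by $g\mapsto g^{-1}$ and by conjugation by $SO(2)$ (Proposition \ref{simme}), and $\gamma(\beta,\phi;-T)=\gamma(\beta,\phi;T)^{-1}$ up to such a conjugation, one checks via the explicit formulas (\ref{geod1})--(\ref{geod3}) and the invariants (\ref{n})--(\ref{nsq}), (\ref{ident}) that $\gamma(\beta,\phi_1;T)$ and $\gamma(\beta,\phi_2;-T)$ can be made to coincide for a suitable pair $\phi_1,\phi_2$ exactly when $C$ is a genuine cut point; the minimality of $T$ in (\ref{zT}) then records that $T$ is the first such coincidence time, which is forced by $\gamma(\beta,\phi;\cdot)$ being a shortest arc up to $T$. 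Conversely, if (\ref{equat})--(\ref{zT}) hold, then $C$ is joined to $e$ by two distinct shortest arcs of equal length $T$, so neither is continuable past $C$, whence $C\in C(e)$; and if $C$ were in $\Sim$ it would admit a unique shortest arc by Propositions \ref{op}–\ref{Sim}, a contradiction, so $C\notin\Sim$ and $C\notin SO(2)$.

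The remaining point is the bound $0<\beta<3/\sqrt5$. Here I would invoke Proposition \ref{se}: for $\beta^2>1$ the first conjugate point along $\gamma(\beta,\phi;t)$ occurs at the root of $\tg(t\sqrt{\beta^2-1}/2)=t\sqrt{\beta^2-1}/2$, and a geodesic stops being a shortest arc no later than its first conjugate point; comparing this conjugate time with the cut time $T(|\beta|)$ and with the value obtained from the coincidence equation (\ref{equat}) pins down the threshold. The critical case $\beta=3/\sqrt5$ is precisely where the two competing shortest arcs to a point of $SO(2)-\{e\}$ merge with the conjugate point, i.e.\ where the "two distinct geodesics" mechanism degenerates into the conjugate-point mechanism; for $\beta\ge 3/\sqrt5$ one shows no new cut points arise beyond $SO(2)-\{e\}$. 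I expect the main obstacle to be this last calibration: carefully tracking, as a function of $\beta$, whether the cut point is produced by the symmetry-pair coincidence or by conjugacy, and verifying that the changeover happens exactly at $3/\sqrt5$; this requires a somewhat delicate comparison of the transcendental equations in Propositions \ref{se} and \ref{kk1}, but it is exactly the computation carried out in \cite{Ber}, which I would cite for the quantitative details.
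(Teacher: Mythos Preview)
The paper does not prove this proposition at all: it simply records that ``the statements from the following proposition were actually proved in \cite{Ber}.'' So there is no argument in the paper to compare your sketch against; the paper's ``proof'' is a bare citation, and since you also end by deferring the quantitative calibration to \cite{Ber}, at that level your proposal and the paper coincide.

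That said, the argumentative scaffolding you add before the citation has a real gap. Your mechanism for producing the second shortest arc is to run $\gamma(\beta,\phi;\cdot)$ backwards, noting that $t\mapsto\gamma(\beta,\phi;-t)$, $0\le t\le T$, is again a noncontinuable shortest arc. But its endpoint is $\gamma(\beta,\phi;-T)$, which is \emph{not} the same point as $C=\gamma(\beta,\phi;T)$; indeed, as you yourself observe, $\gamma(\beta,\phi;-T)=\exp(T\beta c)\,C^{-1}\,\exp(-T\beta c)$, a conjugate of $C^{-1}$, not $C$. So this does not furnish a second shortest arc to $C$, and the sentence ``$\gamma(\beta,\phi_1;T)$ and $\gamma(\beta,\phi_2;-T)$ can be made to coincide for a suitable pair $\phi_1,\phi_2$ exactly when $C$ is a genuine cut point'' is asserting precisely what has to be proved. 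The correct route (which is what \cite{Ber} does and what the present paper exploits in the proof of Theorem~\ref{cutloc}) is: a non-conjugate cut point admits two \emph{distinct} minimizing geodesics; both must have the same length $T$ and, from the invariants (\ref{n}), (\ref{system3}), the same $|\beta|$; one then argues that the two values of $\beta$ must have opposite signs (equivalently, via Remark~\ref{rem2}, the second arc is $\gamma(\beta,\phi_2;-T)$), and the bound $\beta<3/\sqrt5$ comes from the area estimate $S\le\pi$ for the region on $L^2$ bounded by the projected arc. None of this is supplied by the backward-running symmetry alone.
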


The main result of this section constitutes

\begin{theorem}
\label{cutloc}
For every element $g\in (SO_0(2,1),d)$, $C(g)=gC(e)$ and $S(g)=gS(e)$. Moreover,
\begin{equation}
\label{cutloc0}
C(e)=K(e)\cup S_1(e),
\end{equation}
where
\begin{equation}
\label{cutloc1}
K(e)=\left\{C \in SO_0(2,1)\,\mid\,c_{21}=-c_{12},\,\,c_{31}=-c_{13},\,\,c_{23}=c_{32},\,\,c_{22}+c_{33}<0\right\},
\end{equation}
\begin{equation}
\label{congloc}
S_1(e)=SO(2)-\{e\}.
\end{equation}
\end{theorem}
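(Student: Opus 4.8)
The plan is to first dispose of the equivariance statements $C(g)=gC(e)$ and $S(g)=gS(e)$, which are immediate from left-invariance of the metric $d$: left translation by $g$ is an isometry of $(SO_0(2,1),d)$ sending $e$ to $g$, so it carries shortest arcs from $e$ to shortest arcs from $g$, and preserves the property of being noncontinuable; likewise it commutes with the exponential map, so it carries critical points to critical points. Thus everything reduces to computing $C(e)$. The decomposition $C(e)=K(e)\cup S_1(e)$ with $S_1(e)=SO(2)-\{e\}$ is the genuine content, and by Proposition \ref{kk1} the cut locus consists of $SO(2)-\{e\}$ together with the points described by the equation $C=\gamma(\beta,\phi_1;T)=\gamma(\beta,\phi_2;-T)$ for $0<\beta<3/\sqrt5$ with $T$ the first such coincidence time. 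So the heart of the matter is to show that the set of these coincidence points is exactly $K(e)$.

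The key step is to analyze the equation $\gamma(\beta,\phi_1;T)=\gamma(\beta,\phi_2;-T)$ using the coordinate formulas \eqref{geod1}--\eqref{geod3} together with the invariants \eqref{n}--\eqref{system3}. First I would observe that, by Remark \ref{rem2}, the geodesic $\gamma(\beta,\phi_2;-t)$ equals $\gamma(-\beta,\phi_2\pm\pi;t)$; comparing $n$-values via \eqref{n} forces the two geodesics to share the same $|\beta|$ and the same value of $n(T)$, hence the same $|m(T)|$ via \eqref{system3}. Then I would extract, from the entries of $C$, the relations $c_{21}=-c_{12}$, $c_{31}=-c_{13}$: these say precisely (via Proposition \ref{repr} and Proposition \ref{k1}) that in the decomposition $C=s_1k_1$ the rotation angle $\eta$ satisfies $\cos\eta=(c_{22}+c_{33})/(1+c_{11})$ and $\sin\eta=(c_{32}-c_{23})/(1+c_{11})$, and one should check the coincidence equation forces $\sin\eta=0$, i.e. $c_{23}=c_{32}$, together with $c_{22}+c_{33}<0$ (equivalently $\cos\eta=-1$, i.e. $k_1=1\otimes(-E_2)$). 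Conversely, one must verify that every $C$ satisfying the four conditions in \eqref{cutloc1} is actually realized as such a coincidence point with $\beta$ in the required range, which is where the bound $0<\beta<3/\sqrt5$ enters: this is the condition under which $\gamma(\beta,\phi;t)$ stays a shortest arc up to the coincidence time $T$, as recorded in Proposition \ref{kk1}.

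I expect the main obstacle to be the forward direction of the set equality: showing that the coincidence equation \eqref{equat}, under the minimality condition \eqref{zT} and the constraint $0<\beta<3/\sqrt5$, really produces all four defining relations of $K(e)$ and nothing more or less. Concretely, one has to unwind the system \eqref{sistem1}--\eqref{sistem2} for both geodesics at parameter values $T$ and $-T$ and show the only way to reconcile them is $\beta T\sqrt{\beta^2-1}$-type relations degenerating so that $\beta t$-rotation contributes a half-turn while $\phi_1-\phi_2$ absorbs the rest; the algebra linking $\phi_1,\phi_2,\beta,T$ is delicate, and care is needed to see that $c_{22}+c_{33}<0$ (rather than merely $\neq 1+c_{11}$) is exactly the surviving inequality. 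Once that is done, the reverse inclusion follows by exhibiting, for a given $C\in K(e)$, the explicit parameters: $\cosh$-type data recover $\beta$ and $T$ from $c_{11}$ and the trace relations, and the two angles $\phi_1,\phi_2$ are read off from the first column, after which Proposition \ref{kk1} certifies $C\in C(e)$. Finally, $K(e)$ and $S_1(e)=SO(2)-\{e\}$ are disjoint since every element of $K(e)$ has $c_{22}+c_{33}<0<2$ while $SO(2)$ consists of matrices with $c_{11}=1$ and hence $n(C)=0$, so by Proposition \ref{symm}-type estimates $c_{22}+c_{33}$ cannot be negative; this gives \eqref{cutloc0} as a genuine (essentially disjoint) union and completes the proof.
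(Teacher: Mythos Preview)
Your overall strategy is sound, and the reductions via left-invariance and Proposition \ref{kk1} are correct. But the approach you outline for the core step---unwinding the system \eqref{sistem1}--\eqref{sistem2} at times $T$ and $-T$ to force the four defining conditions of $K(e)$---is considerably heavier than what the paper does, and you yourself flag the algebra as ``delicate'' without carrying it out. The paper bypasses that computation entirely with a group-theoretic observation: from the product form \eqref{sol} one has
\[
\gamma(\beta,\phi;-t)=\exp(t\beta c)\,\gamma(\beta,\phi;t)^{-1}\,\exp(-t\beta c),
\]
so the coincidence equation $\gamma(\beta,\phi_1;T)=\gamma(\beta,\phi_2;-T)$ says exactly that $C=kC^{-1}k^{-1}$ for some $k\in SO(2)$, i.e.\ $C\in\Sigma$ in the sense of \eqref{init}. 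Since $\Sigma=\Sim\cup(\Sim\cdot(1\otimes(-E_2)))$ and Proposition \ref{Sim} says the $\Sim$ component is reached only by $\beta=0$ geodesics, the nonzero-$\beta$ cut points land in $\Sim\cdot(1\otimes(-E_2))$, which equals $K(e)-SO(2)$ by the decomposition of Proposition \ref{repr} (compare \eqref{phit}). No coordinate algebra is needed.

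For the reverse inclusion the paper also takes a softer route than you propose: rather than explicitly reconstructing $(\beta,T,\phi_1,\phi_2)$ from a given $C\in K(e)$, it invokes the Cohn--Vossen theorem to get \emph{some} shortest arc to $C$, notes $\beta\neq0$ because $C\notin\Sim$, and reads the coincidence relation back off $C\in\Sigma$. Your explicit-reconstruction plan would still owe a separate argument that the constructed geodesic segment is a shortest arc and that $T$ is the \emph{first} coincidence time---precisely what Cohn--Vossen plus Proposition \ref{kk1} supply for free. Finally, your closing disjointness remark is incorrect: $1\otimes(-E_2)$ lies in both $K(e)$ and $S_1(e)$ (Proposition \ref{symm} applies only to matrices in $SO_0(2,1)-SO(2)$, so it gives no sign control on $c_{22}+c_{33}$ inside $SO(2)$); fortunately disjointness is neither claimed nor needed for \eqref{cutloc0}.
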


\begin{proof}
In consequence of Propositions \ref{se}, \ref{kk1}, it is enough to prove that
\begin{equation}
\label{eq}
C(e)-SO(2)=K(e)-SO(2).
\end{equation}
Note first of all that
\begin{equation}
\label{K}
K(e)-SO(2)= (\Sim-\{e\})\cdot (1\otimes (-E_2))= (1\otimes (-E_2))\cdot (\Sim-\{e\})
\end{equation}
on the ground of (\ref{cutloc1}), (\ref{phit}), and Proposition \ref{repr}.

It follows from (\ref{sol}) that
$$\gamma(\beta,\phi_1;-t)=\exp(t\beta c)\gamma(\beta,\phi_1,t)^{-1}\exp(-t\beta c).$$
Therefore one can rewrite the equality $\gamma(\beta,\phi_1;t)=\gamma(\beta,\phi_2;-t)$ in the form
\begin{equation}
\label{ini}
\gamma(\beta,\phi_1;t)=k\gamma(\beta,\phi_1,t)^{-1}k^{-1},\quad k\in SO(2).
\end{equation}
Then in consequence of (\ref{init}), we get
$$\gamma(\beta,\phi_1;t)\in \Sigma = \Sim\cup (\Sim\cdot (1\otimes (-E_2))).$$
It follows from here, Proposition \ref{Sim} and (\ref{K}) that $C\in \Sim\cdot (1\otimes (-E_2))=K(e)$
for any matrix $C$  from (\ref{equat}), if $0<\beta < \frac{3}{\sqrt{5}}.$
Thus, in consequence of Proposition \ref{kk1}, we have $C(e)-SO(2)\subset K(e)-SO(2).$

Let $C\in K(e)-SO(2).$ Then in consequence of (\ref{K}) and (\ref{init}),
$$C\in (\Sim-\{e\})\cdot (1\otimes (-E_2))\subset \Sigma.$$
By the Cohn--Vossen theorem \cite{CF}, there exists a shortest arc
$\gamma(\beta,\phi_1;t),$ $0\leq t \leq t_1,$ joining $e$ and $C.$ By Proposition \ref{Sim}, we have
$0< \mid\beta\mid < \frac{3}{\sqrt{5}}$. Moreover, if $t=t_1$ then holds the relation (\ref{ini}) that can be rewritten
in form $\gamma(\beta,\phi_1;t_1)=\gamma(\beta,\phi_2;-t_1)$ by (\ref{sol}).
On the ground of Corollary \ref{rem2}, we can assume that $\beta>0.$ Thus $t_1=T$ (see (\ref{zT})) because
we consider a shortest arc. Proposition \ref{kk1} implies that $C\in C(e)-SO(2).$
\end{proof}

\section{Sub-Riemannian distance}

The next proposition reduces the distance search to the search of the $\mid\beta\mid,$ \textit{the geodesic curvature} of the projection $p(\gamma(t)),$ $t\in \mathbb{R},$ of the geodesic (\ref{sol}) onto $L^2$ \cite{Ber}.

\begin{proposition}
\label{dist}
If $C\in SO_0(2,1)-\{e\}$ then
\begin{equation}
\label{distance}
\mid\beta\mid d(C,e)=\Arccos\left(\frac{c_{22}+c_{33}}{1+c_{11}}\right)+
2\arccos\left(\frac{m}{\sqrt{c^2_{11}-1}}\right):=\mid\eta\mid + 2\arccos\mu,
\end{equation}
\begin{equation}
\label{md}
\mid\mu\mid=\frac{\mid m\mid}{\sqrt{c^2_{11}-1}}:=\sqrt{1-\beta^2\frac{c_{11}-1}{1+c_{11}}},
\end{equation}
where $\beta$ and $m$ are given from the representation of the matrix $C$ in the form (\ref{geod1}), (\ref{geod2}), (\ref{geod3}), and
$t=d(C,e)$. Thus the following statements hold.

I. If $C\in SO(2)$ then $\mu=-1.$

If $C\notin SO(2)$ then

II. $\mu=0$ and $\beta^2=(1+c_{11})/(c_{11}-1)$ for $\mid \eta \mid=\pi(\sqrt{(1+c_{11})/2}-1):=\theta.$

III. $\mu<0$ for $\mid\eta\mid> \theta.$

IV. $\mu>0$ for $\mid\eta\mid< \theta.$
\end{proposition}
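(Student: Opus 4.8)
\noindent\textit{Proof plan.} I would deduce the statement from the algebraic identity (\ref{md}) together with one complex equation that reads the angle $\beta t$ of (\ref{sistem1}) off the matrix $C$. By the Cohn--Vossen theorem a shortest arc joins $e$ to $C$; by Theorem \ref{theor1} it is a geodesic $\gamma(t)=\gamma(\beta,\phi;t)$, $0\le t\le t_1=d(C,e)$, and by Remark \ref{rem2} one may take $\beta\ge 0$. The degenerate cases are handled at once. If $\beta=0$ then, by Propositions \ref{symm} and \ref{op}, $C\in\Sim-\{e\}$, $c_{22}+c_{33}=1+c_{11}$, $m=\sqrt{c_{11}^2-1}$, so $\eta=0$, $\mu=1$, both formulas are trivial, and $d(C,e)=\Arch c_{11}$ by Proposition \ref{beta0}. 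If $C\in SO(2)$ then $c_{11}=1$ by Proposition \ref{so2}, hence $n=m=0$ by (\ref{n}), (\ref{system3}); but the shortest arc reaches $SO(2)$ at an instant $t_1$ with $t_1\sqrt{\beta^2-1}\in 2\pi\mathbb{Z}$, and for $t$ just below $t_1$ one has $m<0$, $\sqrt{c_{11}^2-1}>0$, so $\mu=m/\sqrt{c_{11}^2-1}\to -1$: this is Statement I. Finally (\ref{md}) is immediate, since $m^2=(c_{11}^2-1)-\beta^2(c_{11}-1)^2$ by (\ref{n}), (\ref{system3}), hence $\mu^2=1-\beta^2(c_{11}-1)/(c_{11}+1)$. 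Henceforth $\beta>0$, $c_{11}>1$.

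The core is as follows. Put $P:=2+(1-2\beta^2)(c_{11}-1)$, $Q:=2\beta m$. Adding $i$ times the second relation of (\ref{sistem1}) to the first,
\begin{equation*}
(c_{22}+c_{33})+i(c_{32}-c_{23})=(P-iQ)\,e^{i\beta t},
\end{equation*}
whereas by Proposition \ref{k1} the left side equals $(1+c_{11})e^{i\eta}$. Setting $\psi:=\arccos\mu\in[0,\pi]$, the identity (\ref{md}) and the definition of $\mu$ give $\cos 2\psi=2\mu^2-1=P/(1+c_{11})$ and $\sin 2\psi=2\mu\sqrt{1-\mu^2}=Q/(1+c_{11})$ (for $\beta>0$, $\sqrt{1-\mu^2}=\beta\sqrt{(c_{11}-1)/(c_{11}+1)}$); hence $P^2+Q^2=(1+c_{11})^2$ and $(P-iQ)/(1+c_{11})=e^{-2i\psi}$, so that
\begin{equation*}
e^{i\eta}=e^{i(\beta t-2\arccos\mu)},\qquad\text{that is}\qquad \eta\equiv\beta t-2\arccos\mu\pmod{2\pi}.
\end{equation*}
Since $\arccos\bigl((c_{22}+c_{33})/(1+c_{11})\bigr)=\arccos(\cos\eta)=|\eta|$, the asserted equality (\ref{distance}) means precisely that the correct representative above is $\beta t-2\arccos\mu=|\eta|\in[0,\pi]$.

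Removing this indeterminacy modulo $2\pi$, i.e. proving $0\le\beta t-2\arccos\mu\le\pi$ along a shortest arc, is the main obstacle; this is where the fine analysis of shortest arcs in \cite{Ber} (Proposition \ref{T}) and the description of the cut locus (Theorem \ref{cutloc}) are needed. The plan is to insert the explicit $m,n$ from (\ref{mn1})--(\ref{mn3}) and argue by cases. Writing $s=t\sqrt{1-\beta^2}$, $\nu=\beta/\sqrt{1-\beta^2}$ when $\beta<1$, and $x=t\sqrt{\beta^2-1}$, $\lambda=\beta/\sqrt{\beta^2-1}$ when $\beta>1$, one computes that $\arccos\mu$ equals $\arctan(\nu\,\Th(s/2))$, $\arctan(t/2)$, $\arctan(\lambda\tan(x/2))$ respectively, so that $\beta t-2\arccos\mu$ equals $\nu s-2\arctan(\nu\,\Th(s/2))$, $t-2\arctan(t/2)$, $\lambda x-2\arctan(\lambda\tan(x/2))$ (in the branch continuous from $0$); in each case the $t$-derivative is $\ge 0$ (for instance it is $\frac{\lambda(\lambda^2-1)\tan^2(x/2)}{1+\lambda^2\tan^2(x/2)}\cdot\frac{1}{dt/dx}$ in the last case), so $\beta t-2\arccos\mu$ is nondecreasing in $t$ and vanishes at $t=0$. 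That it does not exceed $\pi$ before $t=t_1$ then follows because a noncontinuable shortest geodesic from $e$ ends on the cut locus $K(e)\cup(SO(2)-\{e\})$, every matrix of $K(e)$ has $\eta=\pi$ (Theorem \ref{cutloc}, Proposition \ref{repr}), and $SO(2)$ is reached only at $x=2\pi$, where then $\beta\ge 3/\sqrt 5$ (i.e. $\lambda\le 3/2$) and the value is $2\pi(\lambda-1)\le\pi$. This pins the branch and yields (\ref{distance}).

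It remains to read off Statements II--IV. If $\mu=0$ then $m=0$ and, by (\ref{md}), $\beta^2=(1+c_{11})/(c_{11}-1)>1$; by (\ref{mn3}) then $\sin(t\sqrt{\beta^2-1})=0$, the values $t\sqrt{\beta^2-1}\in 2\pi\mathbb{Z}$ are excluded (they would force $c_{11}=1$), and $t=d(C,e)$ forces $t\sqrt{\beta^2-1}=\pi$; hence $\beta t=\pi\beta/\sqrt{\beta^2-1}=\pi\sqrt{(1+c_{11})/2}$ and, by (\ref{distance}), $|\eta|=\beta t-\pi=\pi\bigl(\sqrt{(1+c_{11})/2}-1\bigr)=\theta$, which is Statement II. Finally $\sgn\mu=\sgn m$, and along the shortest arc $|\eta|=\beta t-2\arccos\mu$ is monotone in $t$ while $m$ changes sign exactly at the instant just examined; hence $\mu>0\Leftrightarrow|\eta|<\theta$ and $\mu<0\Leftrightarrow|\eta|>\theta$, which are Statements IV and III.
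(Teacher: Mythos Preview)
Your approach is genuinely different from the paper's. The paper projects the shortest arc onto the Lobachevskii plane $L^2$ via the submetry $p$ and interprets $|\eta|$ as the area $S(t_1)$ of the curvilinear digon bounded by the projected curve and the geodesic chord $[x(0),x(t_1)]$ (statement~3) of the Introduction together with formula~(44) of \cite{Ber}); the crucial bound $S(t_1)\le\pi$ for a shortest arc, established in \cite{Ber}, then fixes the branch at once, and the trichotomy II--IV is read off from whether the digon is smaller than, equal to, or larger than a semidisc of radius $r/2$. Your route replaces all of this hyperbolic geometry by the single complex identity $(1+c_{11})e^{i\eta}=(P-iQ)e^{i\beta t}$, which is elegant and yields the congruence $\eta\equiv\beta t-2\arccos\mu\pmod{2\pi}$ without any reference to $L^2$.

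There is, however, a genuine gap in your branch-pinning. Knowing that the noncontinuable shortest arc ends on $K(e)\cup(SO(2)-\{e\})$ and that ``every matrix of $K(e)$ has $\eta=\pi$'' only gives $f(T)\equiv\pi\pmod{2\pi}$; it does not exclude $f(T)=3\pi,5\pi,\dots$. What actually closes the argument is the \emph{converse}: for $C\notin SO(2)$, $|\eta|=\pi$ forces $k_1=1\otimes(-E_2)$ in Proposition~\ref{repr} (by Proposition~\ref{k1}), hence $C\in K(e)$ by (\ref{K}). With this in hand, if $f(t')=\pi$ for some $0<t'<T$ then $\gamma(t')\in K(e)\subset C(e)$, contradicting the minimality of $T$; so $f(T)\le\pi$ as needed. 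You should state and use this converse explicitly. Your final argument for III--IV is also incomplete: you reason along a single geodesic (fixed $\beta$, varying $t$), but both $|\eta|$ and $\theta=\theta(c_{11})$ depend on the moving endpoint, and you have not shown that the sign of $|\eta|-\theta$ tracks the sign of $m$ along the arc. As written you only prove one direction of II ($\mu=0\Rightarrow|\eta|=\theta$); the full trichotomy still needs either the paper's geometric picture or a separate continuity/connectedness argument on the parameter space $(c_{11},|\eta|)$.
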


\begin{proof}
Let $\gamma(t):=\gamma(\beta,\phi;t),$ $0\leq t\leq t_1,$ be a line segment of the geodesic in $(SO_0(2,1),d)$ of the form  (\ref{sol}), which is a shortest arc. Then its projection
\begin{equation}
\label{pro}
x(t)=p(\gamma(t)),\quad 0\leq t\leq t_1,
\end{equation}
onto $L^2$ is either the circle $S^1$ or an unclosed curve which has no self-intersection \cite{Ber}. On the ground of the statement 3) from Introduction, the first case is characterized by the condition $\gamma(t_1)\in SO(2)$. Let $S(t_1)$ be the area of a region in $L^2$ bounded by $S^1$ or the curvilinear digon $P,$ consisting of the curve (\ref{pro}) and the shortest arc $[x(0),x(t_1)]$ with the length $r=r(t_1)$ and the interior angle $\psi=\psi(t_1)$ in $L^2$. Thus
$S(t_1)\leq \pi$ by paper \cite{Ber}.

At first consider the case $C=\gamma(t_1)\notin SO(2).$
In \cite{Ber} (see formula (44)), are found the following equations for the digon $P$:
$$S(t_1)=\mid\beta\mid t_1 - 2\psi,\quad r=\Arch(1+n),\quad \cos\psi = \frac{m}{\sqrt{n(n+2)}}.$$
By the statement 3) from Introduction, Proposition \ref{k1} and (\ref{n}), (\ref{system3}), these equations can be rewritten in the form
\begin{equation}
\label{src}
\mid\eta\mid=\mid\beta\mid t_1 -2\psi,\quad r=\Arch c_{11},\quad \cos\psi = \sgn(m)\mid\mu\mid,
\end{equation}
\begin{equation}
\label{src1}
\mid\eta\mid=\arccos\left(\frac{c_{22}+c_{33}}{1+c_{11}}\right),\quad \mid\mu\mid=\sqrt{1-\beta^2\frac{c_{11}-1}{1+c_{11}}}.
\end{equation}

Obviously, $\mu=0\,\Longleftrightarrow\,\psi=\pi/2$. This is equivalent to the fact that $P$ bounds a semidisc of the radius $r/2$ in $L^2.$ In this case, in consequence of (46) in \cite{Ber} and (\ref{src}), we have $\mid\beta\mid=\sqrt{\frac{1+c_{11}}{c_{11}-1}}$ and
$$\mid\eta\mid= S(t_1)=\pi\left(\ch\frac{r}{2}-1\right)=\pi\left(\sqrt{\frac{1+c_{11}}{2}}-1\right):=\theta.$$
It is clear from geometric considerations that
$$\mu >0\,\,\Longleftrightarrow\,\,\mid\eta\mid < \theta,\quad \mu < 0\,\,\Longleftrightarrow\,\,\mid\eta\mid > \theta.$$

Let $C=\gamma(t_1)\in SO(2)$ and $t_n\nearrow t_1$ for
$n\rightarrow \infty,$ $n\geq 2,$ $\psi_n=\psi(t_n),$ $\mu_n=\mu(t_n).$ Then it is clear that
$$\gamma(t_n)\rightarrow C,\quad \psi_n\nearrow \pi,\quad \mu_n\searrow \mu=-1,\quad d(e,\gamma(t_n))=t_n\nearrow t_1=d(e,C).$$
Therefore the equality (\ref{distance})  holds for $\mu=-1.$
\end{proof}

\begin{theorem}
\label{distan}
0. If $C\in \Sim$ then
$$d(e,C)=\Arch c_{11}.$$

If $C\notin \Sim$ then hold the following statements.

I. If $C\in SO(2)$ then
\begin{equation}
\label{dpp}
\mid\beta\mid\geq \frac{3}{\sqrt{5}},\quad d(e,C)=\frac{2\pi}{\sqrt{\beta^2-1}}
\end{equation}
and $\mid\beta\mid$ is a unique number defined by equality
\begin{equation}
\label{mbp}
\frac{2\mid\beta\mid\pi}{\sqrt{\beta^2-1}}= \Arccos\left(\frac{c_{22}+c_{33}}{1+c_{11}}\right)+ 2\pi.
\end{equation}

II. If $\mid\eta\mid =\pi(\sqrt{(1+c_{11})/2}-1):=\theta$ then $\mid\beta\mid=\sqrt{\frac{1+c_{11}}{c_{11}-1}},$\,\,\,$d(e,C)=\pi \sqrt{\frac{c_{11}-1}{2}}.$ At the same time, $\mid\beta\mid=\frac{2}{\sqrt{3}}$ only if $\mid\eta\mid=\pi;$
otherwise we have $\mid\beta\mid> \frac{2}{\sqrt{3}}.$

III. If $\mid\eta\mid > \theta$ then\,\,\,$\frac{2}{\sqrt{3}}< \mid\beta\mid < \sqrt{\frac{1+c_{11}}{c_{11}-1}}.$ If
$\mid\eta\mid=\pi$ then $\mid\beta\mid\leq \frac{3}{\sqrt{5}}.$ Generally,
\begin{equation}
\label{d}
d(e,C)=(2\pi-\gamma)/\sqrt{\beta^2-1},\quad\mbox{where}
\end{equation}
\begin{equation}
\label{et}
\mid\eta\mid=(2\pi-\gamma)\frac{\mid\beta\mid}{\sqrt{\beta^2-1}}-2\arccos(-\mid\mu\mid),\quad 0\leq \gamma<\pi;
\end{equation}
\begin{equation}
\label{sin}
\cos\gamma=\beta^2-c_{11}(\beta^2-1).
\end{equation}

IV. If $\mid\eta\mid < \theta$ then the following statements are valid.

a) If $\mid\eta\mid= \sqrt{2(c_{11}-1)}-2\arccos\sqrt{\frac{2}{1+c_{11}}}:=\alpha$ then
$$\mid\beta\mid=1,\quad d(e,C)=\sqrt{2(c_{11}-1)}.$$
b) If $\mid\eta\mid < \alpha$ then $0< \mid\beta\mid< 1,$ and $\mid\beta\mid$ is an implicit solution of equation
$$\mid\eta\mid= 2\left(\frac{\mid\beta\mid}{\sqrt{1-\beta^2}}\Arch\left(\sqrt{\frac{1+c_{11}}{2}}\mid\mu\mid\right)
-\arccos\mid\mu\mid\right),$$
$$d(e,C)=\frac{2}{\sqrt{1-\beta^2}}\Arch\left(\sqrt{\frac{1+c_{11}}{2}}\mid\mu\mid\right).$$
c) If $\mid\eta\mid > \alpha$ then $1< \mid\beta\mid< \sqrt{\frac{1+c_{11}}{c_{11}-1}}.$ If $\mid\eta\mid=\pi$
then $\mid\beta\mid < \frac{2}{\sqrt{3}}.$ Generally, we have (\ref{sin}) and
\begin{equation}
\label{d1}
d(e,C)=\gamma/\sqrt{\beta^2-1},\quad\mbox{where}
\end{equation}
\begin{equation}
\label{et1}
\mid\eta\mid=\gamma\frac{\mid\beta\mid}{\sqrt{\beta^2-1}}-2\arccos\mid\mu\mid,\quad 0< \gamma <\pi.
\end{equation}

V. The above-mentioned conditions uniquely define $d(e,C).$
\end{theorem}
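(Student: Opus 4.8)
The plan is to compute $d(e,C)$ for an arbitrary $C\in SO_0(2,1)-\{e\}$ — which suffices, since $d(g_1,g_2)=d(e,g_1^{-1}g_2)$ by left-invariance — by identifying, among the geodesics $\gamma(\beta,\phi;t)$ issuing from $e$ and passing through $C$, the one that is still a shortest arc when it reaches $C$; its length is then $d(e,C)$. Such a shortest arc exists by the Cohn--Vossen theorem \cite{CF}, and by Remark \ref{rem2} we may take $\beta\geq0$. First I would dispose of the case $\beta=0$: by Propositions \ref{op} and \ref{Sim} the segment $\gamma(0,\phi;t_1)$ is a shortest arc and sweeps out exactly $\Sim\cup\{e\}$, and Proposition \ref{op} gives $\ch t_1=c_{11}$; hence $d(e,C)=\Arch c_{11}$ for $C\in\Sim$ (statement 0), while for $C\notin\Sim\cup\{e\}$ every shortest arc to $C$ has $\beta\neq0$.

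Assume now $C\neq e$, $C\notin\Sim$, and let $\gamma(\beta,\phi;t_1)$ with $\beta>0$ be a shortest arc to $C$; then $0<t_1\leq T(\beta)$ by Proposition \ref{T}, with equality precisely when the arc is noncontinuable, i.e. $C\in C(e)$. The numbers $c_{11}$ and $|\eta|:=\Arccos((c_{22}+c_{33})/(1+c_{11}))$ are read directly off $C$ (Propositions \ref{k1}, \ref{arb}), and by Proposition \ref{dist} the quantity $\theta=\pi(\sqrt{(1+c_{11})/2}-1)$ is exactly the value of $|\eta|$ for which $\mu=0$; thus the four alternatives $C\in SO(2)$, $|\eta|>\theta$, $|\eta|=\theta$, $|\eta|<\theta$ partition the possibilities according to data computable from $C$, and these are the cases I--IV. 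In each sub-case the two governing relations are $c_{11}=1+n$ and $|\eta|=\beta t_1-2\psi$, $\cos\psi=\sgn(m)|\mu|$, $|\mu|=\sqrt{1-\beta^2(c_{11}-1)/(1+c_{11})}$, coming from (\ref{src})--(\ref{src1}) when $C\notin SO(2)$ and, when $C\in SO(2)$, by passing to the limit $t\nearrow t_1$ as in the proof of Proposition \ref{dist}. Inserting the explicit $m,n$ from Theorem \ref{theor2} for the appropriate range of $\beta$ turns these into the displayed formulas: for $\beta>1$, $c_{11}=1+n$ becomes $\cos(t_1\sqrt{\beta^2-1})=\beta^2-c_{11}(\beta^2-1)=\cos\gamma$, fixing $t_1\sqrt{\beta^2-1}$ modulo $2\pi$, and $|\eta|=\beta t_1-2\psi$ becomes (\ref{et}) when $t_1\sqrt{\beta^2-1}\in(\pi,2\pi)$ (i.e. $\mu<0$) and (\ref{et1}) when $t_1\sqrt{\beta^2-1}\in(0,\pi)$ (i.e. $\mu>0$); for $\beta=1$ one gets $t_1=\sqrt{2(c_{11}-1)}$ and $|\eta|=\alpha$; for $0<\beta<1$, via $\ch^2(t_1\sqrt{1-\beta^2}/2)=\tfrac{1+c_{11}}{2}|\mu|^2$, one gets the formulas of IV.b; and for $C\in SO(2)$ the projection is a full circle of geodesic curvature $\beta$, forcing $\beta\geq3/\sqrt5$ so that its area is $\leq\pi$ (by \cite{Ber}), $t_1=2\pi/\sqrt{\beta^2-1}$, and (\ref{mbp}).

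Next I would pin down the $\beta$-range attached to each case — $\beta\geq3/\sqrt5$ in I; $\beta=\sqrt{(1+c_{11})/(c_{11}-1)}$ in II; $2/\sqrt3<\beta<\sqrt{(1+c_{11})/(c_{11}-1)}$ in III; $0<\beta<1$, $\beta=1$, $1<\beta<\sqrt{(1+c_{11})/(c_{11}-1)}$ in IV.b, IV.a, IV.c — together with the sharper estimates at $|\eta|=\pi$. These follow by combining the isoperimetric bound $S\leq\pi$ of \cite{Ber}, the $\beta=0$ classification above, and the descriptions of the cut locus and conjugate set (Theorem \ref{cutloc}, Propositions \ref{se}, \ref{kk1}): outside $SO(2)$ the cut time $T(\beta)$ is attained only for $0<\beta<3/\sqrt5$, and a conjugate point outside $SO(2)$ requires $\beta>1$ and $\tg(t\sqrt{\beta^2-1}/2)=t\sqrt{\beta^2-1}/2$.

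Finally — statement V, which is the crux — for each admissible pair $(c_{11},|\eta|)$ I would show that the relevant $|\eta|$-equation has a unique root $\beta$ in the stated range and that the resulting $t_1$ satisfies $t_1\leq T(\beta)$, so that the chosen geodesic really is a shortest arc and $d(e,C)$ is unambiguously given by the displayed formula. Eliminating $\gamma$ via (\ref{sin}) where it occurs, this reduces to checking that the right-hand sides of (\ref{et}), (\ref{et1}) and of the equation in IV.b are strictly monotone in $\beta$ on the indicated intervals. I expect these monotonicity computations in the \emph{general} sub-cases III and IV.c — where $\gamma$, $\mu$ and $\beta$ are coupled through (\ref{sin}) and $|\mu|=\sqrt{1-\beta^2(c_{11}-1)/(1+c_{11})}$ — together with the bookkeeping needed to verify that the case boundaries $\beta=1$, $\beta=2/\sqrt3$, $|\eta|=\theta$, $|\eta|=\alpha$, $|\eta|=\pi$ match up consistently across the cases and with the cut-time function $T(\beta)$, to be the main obstacle.
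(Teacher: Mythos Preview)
Your plan is sound and will yield the theorem, but it follows a genuinely different route from the paper's proof. Where you propose to extract the displayed formulas by direct algebraic manipulation of the explicit $m,n$ from Theorem~\ref{theor2} --- reading off $\cos\gamma$ from $c_{11}=1+n$, the sign of $\mu$ from the range of $t_1\sqrt{\beta^2-1}$, and the IV.b formula from the half-angle identity $\ch^2(t_1\sqrt{1-\beta^2}/2)=\tfrac{1+c_{11}}{2}|\mu|^2$ --- the paper instead works geometrically in the projection to $L^2$: it interprets the digon $P$ bounded by $p(\gamma)$ and the chord $[x(0),x(t_1)]$ as a large or small circular segment (cases III, IV.c), takes the horocycle limit $R\to\infty$ (IV.a), or in IV.b realizes $P$ as the region between an equidistant curve and the upper side of a Saccheri quadrangle, then applies Gauss--Bonnet and the hyperbolic cosine/sine theorems (and a Lambert-quadrangle identity) to obtain the formulas. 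The chief payoff of the geometric route is statement~V: the paper gets uniqueness in one line, from the fact that for a chord of prescribed length in $L^2$ there is, up to reflection, a unique constant-curvature arc enclosing a region of prescribed area --- i.e., Dido's problem has a unique solution. Your approach trades this for the monotonicity-in-$\beta$ computation you correctly flag as the main obstacle; that computation is feasible but longer. Note also that several of the $\beta$-range bounds you attribute to the isoperimetric bound and the cut-locus description (for instance $|\beta|>2/\sqrt{3}$ in II and III, and $|\beta|<2/\sqrt{3}$ in IV.c at $|\eta|=\pi$) are obtained in the paper not from those sources but as immediate consequences of the same area picture: the area of a hyperbolic disk is increasing in its radius $R$ while $|\beta|=\cth R$ is decreasing, so comparison of $P$ with the semidisk or full disk of the same radius gives the bound directly.
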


\begin{proof}
0. This statement is a consequence of Proposition \ref{op}.

I. Relations (\ref{dpp}) were proved in \cite{Ber}. The equality (\ref{mbp}) follows from Proposition \ref{dist}. The uniqueness $\mid\beta\mid$ in this equality follows from inequalities
$$\mid\beta\mid\geq \frac{3}{\sqrt{5}}\Longleftrightarrow 2\pi< \frac{2\mid\beta\mid\pi}{\sqrt{\beta^2-1}}\leq 3\pi.$$

II. The first statement is a consequence of Proposition \ref{dist}. If $\mid\eta\mid=\pi$ then it follows from the first statement that
$$c_{11}=7,\quad \mid\beta\mid=\frac{2}{\sqrt{3}},\quad d(e,C)=\pi\sqrt{3}.$$

Further we shall use the notation from the proof of Proposition \ref{dist}.

If $\mid\eta\mid=S(P)<\pi$ then $\mid\beta\mid>\frac{2}{\sqrt{3}},$ since $P$ is a semidisc in the case II, and the area of a disc in $L^2$ is an increasing function of the geodesic curvature $\mid\beta\mid$ of its bounding circle. More precisely, the area of a disc with radius $R,$ that is equal to $2\pi(\ch R-1),$ is an increasing function of
$R,$ while $\mid\beta\mid=\cth R$ is a decreasing function of $R$ by the first formula in (\ref{cth}).

III. In this case, in consequence of Proposition \ref{dist}, $P$ is a disk or the bigger segment of a disk with radius
$R> r/2$ in $L^2,$ with some center $O,$ and a base with the length $r=\Arch c_{11}.$ The first case, $C\in SO(2)$, was considered in I. In the second case,
$$\frac{\pi}{2}<\psi= \arccos(-\mid\mu\mid) < \pi,\quad 1< \mid\beta\mid < \sqrt{\frac{1+c_{11}}{c_{11}-1}}.$$
Let $\gamma$ be an angle in the triangle $x(0)Ox(t_1)$ at the vertex $O,$  $\gamma_1$ be an
angle at vertices $x(0),$ $x(t_1).$ Then $\gamma_1=\psi-\pi/2.$
The area $S(t_1)$ is equal to the sum of the area of the sector of the radius $R$ with  central angle $2\pi-\gamma$ and the area of the triangle  $x(0)Ox(t_1);$ by the Gauss--Bonnet theorem, the area of a triangle is equal to its excess taken with the minus sign. Therefore,
$$\mid\eta\mid=S(t_1)=(2\pi-\gamma)(\ch R-1)+(\pi-\gamma-2(\psi-\pi/2))=(2\pi-\gamma)\ch R-2\psi.$$
Then in consequence of (\ref{src}),
$$\mid\beta\mid d(e,C)=\mid\beta\mid t_1=\mid\beta\mid(2\pi-\gamma)\sh R=\mid\eta\mid + 2\psi=(2\pi-\gamma)\ch R.$$
Hence
\begin{equation}
\label{cth}
\mid\beta\mid=\cth R,\quad \sh R = \frac{1}{\sqrt{\beta^2-1}},\quad \ch R = \frac{\mid\beta\mid}{\sqrt{\beta^2-1}}.
\end{equation}
By the (first) cosine theorem in the hyperbolic geometry \cite{Berd} и (\ref{cth}), we have
\begin{equation}
\label{cosh}
\ch r = \ch^2R-\sh^2R\cdot\cos\gamma,\quad \cos\gamma=\frac{\ch^2R-\ch r}{\sh^2R}=\beta^2-c_{11}(\beta^2-1).
\end{equation}
Using the above-mentioned relations, we get the equalities (\ref{d}), (\ref{et}), (\ref{sin}).

All statements in III, except the second statement and the second inequality in the first statement, are proved. In our case, we have $\pi \geq \mid\eta\mid=S(P)> S(P_1),$ where $P_1$ is a semidisk of the same radius as $P,$ contained in $P.$ Then in consequence of the same considerations as in the proof of item II, we have $\mid\beta\mid >\frac{2}{\sqrt{3}}$.

Let $\mid\eta\mid=s(P)=\pi.$ If $P$ is a disk of the radius $R$ then
$$2\pi(\ch R-1)=\pi,\quad \ch R=\frac{3}{2},\quad \mid\beta\mid = \cth R=\frac{3}{2\sqrt{(3/2)^2-1}}=\frac{3}{\sqrt{5}}.$$
If $P$ isn't the disc then $\mid\beta\mid > \frac{3}{\sqrt{5}}$ as a corollary of the same argument as above.

IV. Assume at first that $1< \mid\beta\mid <\sqrt{\frac{1+c_{11}}{c_{11}-1}}.$ In this case, we must compute the area $S$ of a smaller segment of a disk of radius $R>r/2,$ cut off by the line segment $[x(0),x(t_1)],$ and the angles
$\psi_2=\pi-\psi>\pi/2,$ $\gamma_2=\pi/2-\psi<\pi/2$ plays the role of
the angles $\psi,$ $\gamma_1$ for the bigger segment with area $S'$. Then, as in item III,
$$S'=(2\pi-\gamma)\ch R-2\psi_2 = (2\pi-\gamma)\ch R-2(\pi-\psi),$$
$$S=2\pi(\ch R-1)-S'=\gamma\ch R-2\psi.$$
The same relations (\ref{cth}), (\ref{cosh}), and so the equalities  (\ref{sin}), (\ref{d1}), (\ref{et1}) are valid,
then by the sine theorem,
\begin{equation}
\label{sint}
\frac{\sh R}{\sh r}=\frac{\sin\gamma_2}{\sin\gamma}=\frac{\cos\psi}{\sin\gamma}.
\end{equation}
If $\mid\eta\mid=S(P)=\pi$ then $$\pi< \pi(\ch R-1),\quad \ch R > 2,\quad \mid\beta\mid=\cth R < \frac{2}{\sqrt{2^2-1}}=\frac{2}{\sqrt{3}}.$$

a) The case of the horocycle $\mid\beta\mid=1$ is obtained from the case just considered for $R\nearrow \infty$ and fixed
$c_{11}.$ In this limit passage, $\psi$ decreases, therefore $\cos\psi$ increases and in concequence of
(\ref{sint}),
$$\cos\psi>1/\sqrt{1+c_{11}},\quad \gamma\searrow 0,\quad \sin\gamma\searrow 0,\quad\gamma\sim\sin\gamma,$$
$$\mid\eta(1)\mid=\lim_{R\rightarrow\infty}(\gamma\ch R-2\psi)=\lim_{R\rightarrow\infty}\left(\frac{\cos\psi\sqrt{c^2_{11}-1}\ch R}{\sh R}-2\psi\right)=$$
$$\cos\psi(1)\sqrt{c^2_{11}-1}-2\psi(1),$$
$$\cos\psi(1)=\mid\mu(1)\mid=\sqrt{1-\frac{c_{11}-1}{1+c_{11}}}=\sqrt{\frac{2}{1+c_{11}}}.$$
From here follows items a), c) and the first statement of item b).

b) A semigeodesic coordinate system in $L^2$ has the form $ds^2=du^2+\ch^2(u)dv^2$ (see, for example, \cite{Ber}).

Consider a curvilinear quadrangle  $P_1:\,\,0\leq u\leq\sigma,\,\, -l\leq v\leq l.$
Lines $0\leq u\leq \sigma,\,\, v=-l\,\,\mbox{or}\,\,v=l;$  $-l\leq v\leq l,\,\,u=0,$ are straight line segments, and line $u=\sigma$ is an equidistant curve on the distance $\sigma$ from the straight line $u=0.$
The area $S_1$ of the quadrangle $P_1$ is equal to
$$S_1=2l\int_0^{\sigma}\ch udu=2l\sh\sigma.$$
The Gauss-Bonnet theorem (see \cite{Pog} or Theorem 5 in \cite{Ber}) applied to $P_1$ gives
$$\mid\beta\mid 2l\ch\sigma+4\left(\pi-\frac{\pi}{2}\right)=2\pi+2l\sh\sigma.$$
Consequently,
\begin{equation}
\label{thsh}
\mid\beta\mid=\Th\sigma,\quad \sh\sigma=\frac{\Th\sigma}{\sqrt{1-\Th^2\sigma}}=\frac{\mid\beta\mid}{\sqrt{1-\beta^2}},
\quad\ch\sigma=\frac{1}{\sqrt{1-\beta^2}},
\end{equation}
where $\mid\beta\mid=\Th\sigma$ is the geodesic curvature of the equidistant curve $u=\sigma.$

The rectilinear "Saccheri quadrangle"\,$P_2$ with the same vertices as $P_1,$ lies in $P_1.$ Suppose that its upper base has the length $r=\ch c_{11}.$ Then quadrangle $P=\overline{P_1-P_2}$ has the required area
$\mid\eta\mid=S(t_1)=S_1-S_2.$ The $P_2$ has two angles, which are equal to $\pi/2,$ and two angles, that are equal to
$\phi=\pi/2-\psi.$  Then the Gauss-Bonnet theorem applied to $P_2$ gives
$$2\left(\pi-\frac{\pi}{2}\right)+2\left[\pi-\left(\frac{\pi}{2}-\psi\right)\right]=2\pi + S_2\,\,\Longrightarrow\,\,S_2=2\psi,
\,\,\mid\eta\mid=2l\sh\sigma-2\psi.$$

The half of $P_2,$ selected from it by conditions $0\leq v\leq l,$ is a "Lambert quadrangle".
Applying to it Theorem 7.17.1, (ii) from \cite{Berd}, we get
$$\ch l=\ch\left(\frac{r}{2}\right)\sin\phi= \ch\left(\frac{r}{2}\right)\sin\left(\frac{\pi}{2}-\psi\right)=\sqrt{\frac{1+c_{11}}{2}}\cos\psi.$$
Then as a consequence of the third equalities in (\ref{src}) and (\ref{src1}),
$l=\Arch\left(\mid\mu\mid\sqrt{\frac{1+c_{11}}{2}}\right).$
The statements of b) in Theorem follow from here, (\ref{thsh}) and the equalities
$$d(e,C)=t_1=2l\ch\sigma,\quad \mid\eta\mid=2l\sh\sigma-2\psi.$$

V. The statement follows from Propositions \ref{repr}, \ref{k1} and from the fact that for any line segment in $L^2$ with a given length $r=\Arch c_{11}>0$ there exists a unique up to the line segment reflection curve of constant geodesic curvature bounding together with the segment a region with a given area $\mid\eta\mid=S\geq 0.$
\end{proof}


\begin{thebibliography}{99}


\bibitem{Ber}
{\sl Berestovskii V.~N.},
(Locally) shortest arcs of special sub-Riemannian metric on the Lie group $SO_0(2,1)$ (Russian).
{\it Algebra and Anal. 2015. V.\,27, \No\,1. } P.\,959--970.

\bibitem{BerZub1}
{\sl Berestovskii V.~N., Zubareva I.~A.},  Sub-Riemannian distance on Lie groups $SU(2)$ and $SO(3)$ (Russian).
{\it Mat. trudy. Vol. 18(2015). \No 2. } P. 1--19.

\bibitem{BerZub2}
{\sl Berestovskii V.~N., Zubareva I.~A.},
Sub-Riemannian distance on the Lie group $SL(2)$ (Russian). submitted to {\it Siber. Math. J. }

\bibitem{BerZ}
{\sl Berestovskii V.~N., Zubareva I.~A.},
Shapes of Spheres of Special Nonholonomic Left-Invariant Intrinsic Metrics on Some Lie Groups {\it Siber. Math. J. 2001. V.\,42, \No\,4. } P.\,613--628.

\bibitem{Sas}
{\sl Sasaki~S.}, On the differential geometry of tangent bundles of Riemannian manifolds {\it Tohoku Math. J. 1958.
Vol. 10, \No 3. P. 338-354; II: 1962. Vol. 14, \No 2. } P. 146--155.

\bibitem{Bes}
{\sl Besse~A.L.},
Manifolds all of whose geodesics are closed. {\it Ergebnisse der Mathematik und ihrer Grenzgebiete [Results in Mathematics and Related Areas], 93} Springer-Verlag, Berlin, Heidelberg, New York, 1978.

\bibitem{BG}
{\sl Berestovskii~V.N., Guijarro~L.}, A metric characterization of Riemannian submersions
{\it Ann. Global Anal. Geom. 2000. Vol. 18, \No 6.} P. 577--588.

\bibitem{Pog}
{\sl Pogorelov~A.V.},
Differential geometry. Nordhof, 1959.

\bibitem{Selb}
{\sl Selberg~А.}, Harmonic analysis and discontinuous groups in weakly symmetric Riemannian spaces with applications to Dirichlet series {\it J. Indian Math. Soc. (N.S.) 20(1956) }, 47-87.


\bibitem{VG}
{\sl Vershik~A.M., Gershkovich~V.Ya.}, Nonholonomic dynamical systems. Geometry of distributions and variational problems. (Russian). Current problems in mathematics. Fundamental directions, Vol. 16 (Russian), 5-85, 307, {\it Itogi Nauki i Techniki, Akad. Nauk SSSR, Vsesoyuz. Inst. Nauchn. i Techn. Inform.}, Moscow, 1987.

\bibitem{GKM}
{\sl Gromoll~D., Klingenberg~W., Meyer~W.},
Riemannsche Geometrie im Grossen. (German) {\it Lecture Notes in Mathematics, \No. 55.} Springer-Verlag, Berlin-New York, 1968.

\bibitem{BerZub}
{\sl Berestovskii V.~N., Zubareva I.~A.}, Geodesics and shortest arcs of special sub-Riemannian metric on the Lie group $SL(2)$ (Russian), submitted to {\it Siber. Math. J.}


\bibitem{CF}
{\sl Cohn--Vossen~S.}, Existenz k\"urzester Wege (German) {\it Compositio Math. 1936, V. 3}, P. 441--452.


\bibitem{Berd}
{\sl Beardon~A.}, The geometry of discrete groups.
{\it Graduate Texts in Mathematics, 91.} Springer-Verlag, New York, 1983.


\end{thebibliography}
\end{document}